\newtheorem{lemma}{Lemma}
\newtheorem{theorem}{Theorem}
\newtheorem{remark}{Remark}
    \numberwithin{Fig}{section}
\renewcommand{\section}{\@startsection{section}{1}{0mm}
  {-0.5\baselineskip}{0.3\baselineskip}{\bf\leftline}}
\let\oldthebibliography\thebibliography
\renewcommand\thebibliography[1]{
  \oldthebibliography{#1}
  \setlength{\itemsep}{0em} 
  \setlength{\parskip}{0pt} 
}
\numberwithin{equation}{section}
\numberwithin{Lem}{section} 
\numberwithin{Rem}{section} 
\numberwithin{theorem}{section}
\begin{document}
\begin{frontmatter}

\title{A virtual element approximation for the modified transmission eigenvalues for natural materials}

\author[1]{Liangkun Xu 
}\ead{xlkun@gznu.edu.cn}

\author[1]{Shixi Wang 
}\ead{wangshixi@gznu.edu.cn}

\author[1]{Hai Bi \corref{cor1}
}\ead{ bihaimath@gznu.edu.cn}

\cortext[cor1]{Corresponding Author}

\begin{abstract}
In this paper, we discuss a virtual element approximation for the modified transmission eigenvalue problem in inverse scattering for natural materials. In this case, due to the positive artificial diffusivity parameter in the considered problem, the sesquilinear form at the left end of the variational form is not coercive.
We first demonstrate the well-posedness of the discrete source problem using the $\mathds{T}$-coercivity property, then provide the a priori error estimates for the approximate eigenspaces and eigenvalues, and finally report several numerical examples. The numerical experiments show that the proposed method is effective.
\end{abstract}

\begin{keyword}
~the modified transmission eigenvalues; virtual element method; polygonal meshes; error estimation, the $\mathds{T}$-coercivity property.
\MSC  65N30\sep 65N25

\end{keyword}
\end{frontmatter}

\section{Introduction}\label{int}
\indent Transmission eigenvalues not only have important physical applications but also have theoretical importance in the uniqueness and reconstruction in inverse scattering theory. For example, transmission eigenvalues are often used for non-destructive testing and quantitative analysis of materials (see \cite{CakoniNHM2021,CakoniCandH2022,MengMei2022,Liu2023}, etc).
However, the method of using transmission eigenvalues as a target signature only applies to non-absorbing media (real refractive index) and relies on multifrequency scattering data. These restrictions hinder its applicability to absorbing media and lead to high data acquisition costs. To overcome these difficulties and provide effective target signatures for nondestructive material testing, the modified transmission eigenvalue problem was introduced in 2017~\cite{CoM2017,AudibertL2017} and has attracted increasing attention in recent years.
For instance,
\cite{Cogar2018} and \cite{CogarMonk2020} respectively studied the issues of modified transmission eigenvalues in partially coated crack scattering and the modified electromagnetic transmission eigenvalues in inverse scattering theory, in \cite{MonkSelgas2022,WangBY2023} the authors discussed the modified transmission eigenvalues for inverse scattering in a fluid-solid interaction problem, \cite{Zhang2023H} studied the multigrid method based on conforming FE for the modified elastic transmission eigenvalue problem, 
 \cite{Gintides2021} investigated a spectral Galerkin method for the modified transmission eigenvalue problem,  \cite{Li2023} explored a conforming finite element method (FEM), \cite{WangBiY2023} studied a mixed DG FEM, \cite{Meng2023} examined a virtual element method (VEM), 
 and \cite{FengLi2025} study the a posteriori error estimates of the conforming mixed method, etc.\\
\indent The VEM is an extension of the standard FEM by using general polygonal meshes for discretization. It was introduced in \cite{Beirao2013} and \cite{BeiraoBM2014}. The VEM offers greater flexibility compared to the standard FEM when dealing with partial differential equations on complex geometric domains or the ones associated with high-regularity admissible spaces. In recent years, VEM has been successfully applied to a variety of equations and eigenvalue problems, including the Stokes equations \cite{Cangiani2016,Zhao2019}, Navier-Stokes equations \cite{Gatica2018,Liu2019}, nonlinear Schr\"{o}dinger equations \cite{Lizhao2021}, nonlinear time-dependent convection-diffusion-reaction equations \cite{Arrutselvi2021}, the biharmonic equations \cite{Antonietti2018,Zhao2023}, Laplacian eigenvalue problems \cite{Meng2022,Meng2024}, Steklov eigenvalue problems \cite{Mora2015,Wang2022}, transmission eigenvalue problems \cite{MengMei2022,Mora2018,Meng2026XuS}, Stokes eigenvalue problems \cite{Lepe2021,Adak2024}, elliptic eigenvalue problems \cite{Gardini2018,Certik2020}, biharmonic eigenvalue problem \cite{MengXuS2025}, and Oseen eigenvalue problem \cite{AdakLeR2026}, etc.
For the modified transmission eigenvalue problem, \cite{Meng2023} studied its VE approximation in the context of artificial meta-materials. However, to the best of our knowledge, there have been no literature reports on the VE approximation in the case of natural materials.\\
\indent In this paper, based on the above work, we will explore a VE approximation for the modified transmission eigenvalue problem for natural materials. In this case, the artificial diffusivity parameter is positive, which prevents us from using the shift argument method in \cite{Meng2023} to prove the coercivity of the sesquilinear form in the variational formulation. We utilize the $\mathds{T}$-coercivity approach in \cite{CoM2017,DhiaAS2011} to discuss the well-posedness of the problem, and give the a priori estimates of the source problem.
Then, using the spectral approximation theory, we establish a complete error estimation of the eigenvalue problem with the help of the estimates of the source problem. We prove that the error estimate of approximate eigenfunctions in the $L^2$-norm is a higher-order quantity than the $H^1$-norm estimate. \\
\indent The remainder of this paper is organized as follows. The next section presents the variational formulation of the modified transmission eigenvalue problem. The third section gives the VE approximation for the modified transmission eigenvalue problem and provides the a priori error estimates.
The fourth section reports numerical examples on polygonal meshes to verify the efficiency and the accuracy of the proposed method.\\
\indent  In this paper, vector variables are represented by bold letters. Let $H^t(\Omega)$ denote the usual Sobolev space
over the domain $\Omega$ equipped with the norm $\|\cdot\|_{t,\Omega}$, and let $H^0(\Omega)=L^2(\Omega)$ with the inner product
$(\varphi,\psi)_{0,\Omega}=\int_\Omega \varphi \overline{\psi}$. The subscript $\Omega$ will be omitted when the computation domain is $\Omega$. Throughout the paper, we use the letter $C$, with or without subscripts, to denote a general constant that is independent of the mesh size $h$ and may take different values in different contexts. The notation “$a \lesssim b$” indicates that $a \leq Cb$, and “$a \gtrsim b$” indicates that $a \geq Cb$.

\section{The variational form of the modified transmission eigenvalue problem}\label{Sec:Dimen}
\indent Let $\Omega \in \mathbb{R}^2$ be a bounded region with Lipschitz boundary $\partial\Omega$. The modified transmission eigenvalue problem is to find $\lambda \in \mathds{C}$ and non-zero functions $w$ and $u$ such that
\begin{align}
\Delta w + k^2 n w & =  0 \quad  in ~\Omega,                   \tag{1a} \label{2.1a} \\
 \frac{1}{\gamma} \Delta u + \lambda k^2 u &= 0  \quad  in ~\Omega, \tag{1b}  \label{2.1b}\\
 w - u &=0   \quad  on ~\partial \Omega, \tag{1c}  \label{2.1c}\\
  \frac{\partial w}{\partial {\pmb \nu}} - \frac{1}{\gamma} \frac{\partial u}{\partial {\pmb \nu}}&=  0 \quad  on ~ \partial\Omega, \tag{1d} \label{2.1d}
\end{align}
\noindent where $n=n_1({\boldsymbol x})+n_2({\boldsymbol x})\mathrm{i} \in L^\infty(\Omega)$ represent the refractive index, $n_1({\boldsymbol x})>0$, and $n_2({\boldsymbol x})\geq 0$. The constant $k$ denotes a fixed positive wave number,
${1}/{\gamma}$ is an artificial diffusivity parameter that can be positive (the natural case \cite{CoM2017}) or negative (the metamaterial case \cite{AudibertL2017}),
and  ${\pmb \nu}$
denotes the unit outward normal vector. Without loss of generality, suppose
that $n^* = ess\sup_{\Omega} Re(n)$. In this paper, we will discuss the case where ${1}/{\gamma}>0$ but not equal to $1$.\\
\indent To describe the variational formulation of \eqref{2.1a}-\eqref{2.1d}, we first define the following vector function spaces:
\begin{eqnarray*}
\mathds{V} &=& \{(w,u) \in H^1(\Omega) \times H^1(\Omega) : w = u \text{ on } \partial \Omega\}, \\
\mathds{W} &=& L^2(\Omega) \times L^2(\Omega),
\end{eqnarray*}
\noindent equipped with the norms  
$\| {\pmb \Psi} \|_\mathds{V} = \| (\varphi,\psi) \|_\mathds{V} = \bigl( \| \varphi \|_{1}^2 + \| \psi \|_{1}^2 \bigr)^{\frac{1}{2}}, ~\forall~ {\pmb \Psi}=(\varphi,\psi)\in \bf \mathds{V}$  
and  
$\| {\pmb \Psi} \|_\mathds{W} = \bigl( \| \varphi \|_{0}^2 + \| \psi \|_{0}^2 \bigr)^{\frac{1}{2}}, ~ \forall~  {\pmb \Psi}=(\varphi,\psi)\in \mathds{W}$.
We also introduce a positive real number $\beta$ for the sake of discussion.
Using this parameter, we derive the equivalent formula for the problem \eqref{2.1a}-\eqref{2.1d} and obtain the following equations:
\begin{align}
\Delta w + k^2 n w & =  0  & \text{in}~\Omega,                   \tag{2a} \label{2.2a} \\
\frac{1}{\gamma} \Delta u + \beta k^2 u &= (\beta-\lambda) k^2 u    & \text{in}~\Omega, \tag{2b}  \label{2.2b}\\
w - u &=0   &  \text{on}~\partial \Omega, \tag{2c}  \label{2.2c}\\
\frac{\partial w}{\partial {\pmb \nu}} - \frac{1}{\gamma} \frac{\partial u}{\partial {\pmb \nu}}&=  0   & \text{on}~\partial\Omega. \tag{2d} \label{2.2d}
\end{align}
\noindent Let ${\bf U}=(w,u)$. Utilizing Green's formula, we derive the variational form of \eqref{2.2a}-\eqref{2.2d}(see \cite{CoM2017}): find $\lambda \in \mathds{C}$ and ${\bf U} \in {\bf \mathds{V}}\backslash \{ {\bf 0}\}$ such that
\setcounter{equation}{2}
\begin{equation}\label{a2.3}
  a({\bf U},{\pmb \Psi}) = (\beta-\lambda)b({\bf U},{\pmb \Psi}), \quad \forall ~ {\pmb \Psi}=(\varphi,\psi) \in {\bf \mathds{V}},
\end{equation}
where
\begin{eqnarray}
a({\bf U},{\pmb \Psi}) &:=& (\nabla w,\nabla \varphi)_0 - \frac{1}{\gamma} (\nabla u,\nabla \psi)_0 - k^2(nw,\varphi)_0 + \beta k^2(u,\psi)_0 \label{a2.4},\\
b({\bf U},{\pmb \Psi}) &:=& k^2(u,\psi)_0 \label{a2.5}.
\end{eqnarray}
Denote $\|\cdot\|_b=\sqrt{b(\cdot,\cdot)}$. \\
\indent  The adjoint problem of \eqref{a2.3} is to find $\lambda^*\in \mathds{C}$ and ${\bf U}^*=(w^*,u^*) \in {\bf \mathds{V}}\backslash \{ {\bf 0}\}$ such that
\begin{equation}\label{a2.6}
  a({\pmb \Psi},{\bf U}^*) = \overline{(\beta-\lambda^*)}b({\pmb \Psi},{\bf U}^*), \quad \forall ~ {\pmb \Psi} \in \mathds{V},
\end{equation}
and the primal and adjoint eigenvalues are connected via $(\beta-\lambda)=\overline{(\beta-\lambda^*)}$.
Clearly, the sesquilinear forms $a(\cdot,\cdot)$ and $b(\cdot,\cdot)$ are continuous. However, due to the opposing signs of the two gradient terms in $a(\cdot,\cdot)$, it is not coercive, which poses difficulties for us to conduct theoretical analysis.
To address this issue, we first introduce an isomorphism operator $\mathds{T} : \mathds{V} \rightarrow \mathds{V}$:
\begin{align*}
\mathds{T}{\pmb \Psi} =
\begin{cases}
(\varphi, 2\varphi - \psi), & \text{if } 0 < {1}/{\gamma} < 1\\
(\varphi - 2\psi, -\psi), & \text{if } {1}/{\gamma} > 1
\end{cases}
,\quad \forall ~ {\pmb \Psi}= (\varphi,\psi) \in \mathds{V}.
\end{align*}
Next, we will utilize the G{\aa}rding inequality and the operator $\mathds{T}$ to demonstrate the weak $\mathds{T}$-coercivity of $a(\cdot,\cdot)$.
\begin{lemma}\label{lem2.1}
There exists a sufficiently large positive constant $ K $ such that
\begin{eqnarray}
\mathrm{Re} (a({\pmb \Psi}, \mathds{T}{\pmb \Psi})) + K \| {\pmb \Psi} \|_{\mathds{W}}^2
\gtrsim \| {\pmb \Psi} \|_\mathds{V}^2
, \quad \forall \, {\pmb \Psi} \in \mathds{V}. \label{a2.7}
\end{eqnarray}
\end{lemma}
\begin{proof}
For the case of $0 < {1}/{\gamma} < 1$, the proof can be found in Lemma 3.1 of \cite{WangBiY2023}. For the
case of ${1}/{\gamma} > 1$, we can use the proof method of Lemma 3.1 in \cite{WangBiY2023} to obtain the desired conclusion by choosing $0 < \gamma < \varepsilon_1 < 1$ and $\frac{k^2 n^*}{K - \beta k^2} < \varepsilon_2 < \frac{K}{k^2 n^*} - 1$, where $\varepsilon_1$ and $\varepsilon_2$ are the constants in Young's inequality.  \hfill $\Box$
\end{proof}\\
\indent To study the convergence of the VE approximation for the eigenvalue problem \eqref{a2.3}, we consider the corresponding
source problem: for any \({\bf {F}} = (g,f) \in \mathds{W}\), seek ${\bf U}^{\bf F} = (w^f,u^f) \in  \mathds{V}$ such that
\begin{equation}\label{a2.8}
a({\bf U}^{\bf F},{\pmb \Psi}) = b({\bf F},{\pmb \Psi}),~ \forall ~ {\pmb \Psi} \in \mathds{V}.
\end{equation}
\noindent Firstly, we need to address the well-posedness of the problem \eqref{a2.8}, for which we work with the aid of the following auxiliary problem: seek \(k^2 \in \mathds{R}\)
and a non-zero function \({\bf U} \in \mathds{V}\) such that
\begin{equation}\label{a2.9}
  a({\bf U},{\pmb \Psi}) = 0,~ \forall ~ {\pmb \Psi} \in \mathds{V}.
\end{equation}
\noindent From \cite{Li2023, WangBiY2023} we know that if \(k^2\) is not an eigenvalue of the problem \eqref{a2.9}, then
\eqref{a2.9} only has the zero solution. Using a proof method similar to Lemma 35.3 in \cite{Ern2021}, we can derive that \(a(\cdot, \cdot)\) satisfies the inf-sup condition, ensuring that the problem \eqref{a2.8} is well-posed.
Consequently, for any \({\bf {F}} = (g,f) \in \mathds{W}\), from \eqref{a2.8} we can define the solution operator
$\mathcal{A}=(S,T):~\mathds{W} \rightarrow \mathds{V}$ by
\begin{align*}
a(\mathcal{A} {\bf F},{\pmb \Psi}) = b({\bf F},{\pmb \Psi}),~ \forall ~ {\pmb \Psi} \in \mathds{V},
\end{align*}
where
\begin{align*}
\mathcal{A} {\bf F} = (Sf,Tf)= (w^f,u^f)= {\bf U}^{\bf F}.
\end{align*}
We also denote $\mathcal{A} {\bf F} = \mathcal{A} f $, and there holds
\begin{align*}
\|\mathcal{A} {\bf F}\|_{\mathds{V}}\lesssim\|\mathbf{F}\|_{b},
\end{align*}
hence $\mathcal{A}$ is a linear bounded operator from $\mathds{W}$ to $\mathds{V}$.
Thus, since $\mathds{V}$ is compactly embedded in $\mathds{W}$,
$\mathcal{A}:~\mathds{V} \rightarrow \mathds{V}$, $\mathcal{A}:~\mathds{W} \rightarrow \mathds{W}$, and $T: L^2(\Omega) \rightarrow L^{2}(\Omega)$ are compact.
Similarly, from the source problem associated with the adjoint eigenvalue problem \eqref{a2.6} we can define the solution operator of the adjoint problem.
Specifically, for any \({\bf F}^* = (g^*,f^*) \in \mathds{W}\), the adjoint problem of \eqref{a2.8} is to find \({\bf U}^{\bf F^*} = (w^{f^*},u^{f^*}) \in \mathds{V}\) such that
\begin{equation}\label{a2.10}
a({\pmb \Phi},{\bf U}^{\bf F^*}) = b({\pmb \Phi},{\bf F^*}),~ \forall ~ {\pmb \Phi} \in \mathds{V}.
\end{equation}
And from \eqref{a2.10} we can define the
solution operator
 $\mathcal{A}^*=(S^*,T^*)~:~\mathds{W} \rightarrow \mathds{V}$ by
\begin{align*}
 a({\pmb \Phi},\mathcal{A}^* {\bf F^*}) = b({\pmb \Phi},{\bf F^*}),~ \forall ~ {\pmb \Phi} \in \mathds{V},
\end{align*}
where
\begin{align*}
\mathcal{A}^*{\bf F^*}=(S^*f^*,T^*f^*) =   (w^{f^*},u^{f^*})={\bf U}^{\bf F^*}.
\end{align*}
We also denote $\mathcal{A}^*{\bf F^*}=\mathcal{A}^*f^*$.\\
\indent Referring to the concluding remarks in \cite{WangBiY2023}, we have that for a given $f \in L^2(\Omega)$, ${\bf U}^{\bf F}
=(w^f,u^f) \in H^{1+r}(\Omega) \times H^{1+r}(\Omega)$ with $\frac{1}{2} < r \leq 1 $, and there holds
\begin{eqnarray*}
\|w^f\|_{1+r} + \|u^f\|_{1+r} \lesssim \|f\|_{0}.
\end{eqnarray*}
In the remainder of this paper, we assume that $k^2$ is not an eigenvalue of \eqref{a2.9}.

\section{The VE approximation and a priori error estimation}
\indent As a preparation for the VE discretization, we first specify some notations.
Let $\mathcal{T}_h$ be a family of non-overlapping polygonal partitions of $\Omega$. For each $E \in \mathcal{T}_h$, let $h_E$ denote the
diameter of element $E$, and let $h = \max\limits_{E \in \mathcal{T}_h} h_E$ represent the mesh size. ${\boldsymbol x}_E$ and $N_E$ respectively
denote the centroid and the number of vertices of $E$, $E_i$ represents the $i$-th vertex of $E$, and $\mathcal{E}_h$ denotes the set of all edges $e$ in $\mathcal{T}_h$.\\
\indent We also make the following assumptions: assume that there exists a positive real number \(C_\mathcal{T}\) such that
\begin{itemize}
\setlength{\itemsep}{0pt}
\setlength{\parsep}{0pt}
\setlength{\parskip}{0pt}
\item For each edge \(e \in \partial E\), its length $h_e$ satisfies \(h_e \geq C_\mathcal{T} h_E\);
\item Each element \(E\) is star-shaped with respect to all points in a disk of radius \(\geq C_\mathcal{T} h_E\).
\end{itemize}
\noindent Before discretizing the problem \eqref{a2.3} by the VEM, we first split the sesquilinear forms \(a(\cdot, \cdot)\) and \(b(\cdot, \cdot)\) as
follows:
\begin{eqnarray*}
a({\bf U}, {\pmb \Psi}) &:=& \sum_{E \in \mathcal{T}_h} a^E(w, \varphi) - \frac{1}{\gamma} a^E(u, \psi) - k^2 b^E(n w, \varphi) + \beta k^2 b^E(u, \psi),\\
b({\bf U}, {\pmb \Psi}) &:=& \sum_{E \in \mathcal{T}_h} k^2 b^E(u, \psi),\quad\quad \forall {\bf U},~{\pmb \Psi} \in \mathds{V},
\end{eqnarray*}
where
$$
a^E(\cdot, \cdot) = (\nabla \cdot, \nabla \cdot)_{0,E},~~b^E(\cdot, \cdot) = (\cdot, \cdot)_{0,E}.
$$
\indent For an element \(E\), define the boundary space 
\begin{equation*}
\mathcal{B}_l(\partial E) := \{ v: v \in C^0 (\partial E), ~v\mid_e \in \mathcal{P}_l(e),~ \forall e \in \partial E\},
\end{equation*}
where $\mathcal{P}_l(e)$ denotes the space of polynomials on $e$ with degree at most $l$.
Furthermore, for \(l \in \mathds{N}\), we denote by \(\mathcal{M}_l(E)\) the space \(\mathcal{P}_l(E)\) with an appropriate set of scaled bases, i.e.,
$$
\mathcal{M}_l(E) := \left\{ m : m = \left( \frac{{\boldsymbol x} - {\boldsymbol x}_E}{h_E} \right)^{{\boldsymbol s}}, {\boldsymbol s} \in \mathds{N}^2 \text{ and } \mid\! {\boldsymbol s}\!\mid \leq l \right\},
$$
where \({\boldsymbol s} = (s_1,s_2)\) is the multi-index, $\mid\!{\boldsymbol s}\!\mid = s_1 + s_2$, and ${\boldsymbol x}^{\boldsymbol s} = x_1^{s_1} x_2^{s_2}$.
Referring to \cite{AhmadBA2013}, we now define the elliptic projection operator \(\Pi_{l,E}^{\nabla}: H^1(E)\rightarrow \mathcal{P}_l(E)\) as follows:
\begin{eqnarray}
\begin{cases}
(\nabla v,\nabla p_l)_{0,E} = (\nabla (\Pi_{l,E}^{\nabla}v),\nabla p_l)_{0,E},~\forall p_l \in \mathcal{P}_l(E),~ v \in H^1(E),\\
P_{0}^{E}(\Pi_{l,E}^{\nabla}v) = P_{0}^{E}(v),  ~\forall v \in H^1(E), \label{a3.1}
\end{cases}
\end{eqnarray}
where
\begin{align*}
P_{0}^{E}(v) :=
\begin{cases}
\frac{1}{N_E} \sum_{i=1}^{N_E} v(E_i), & l=1,\\
\frac{1}{\mid E\mid}(v,1)_{0,E}, & l\geq 2.
\end{cases}
\end{align*}
Next, we define the local VE space
\begin{align}
\mathcal{W}^h_{l,E} := \{ v_h \in H^1(E) : & v_h \! \mid_{\partial E} \in \mathcal{B}_l(\partial E),
\Delta v_h \! \mid_E \in \mathcal{P}_l(E), \nonumber \\
& (\Pi^\nabla_{l,E}v_h - v_h, p_l)_{0,E} = 0, \forall p_l \in \mathcal{P}_l / \mathcal{P}_{l-2} (E)\}, \label{a3.2}
\end{align}
and the global VE space
\begin{eqnarray*}
\mathcal{W}^h_l := \{ v_h \in H^1(\Omega) : v_h \mid_E \in \mathcal{W}^h_{l,E}, \forall E \in \mathcal{T}_h \},
\end{eqnarray*}
where $\mathcal{P}_l / \mathcal{P}_{l-2}(E)$ denotes the polynomial subspace in $\mathcal{P}_l(E)$ that is $L^2$-orthogonal to $\mathcal{P}_{l-2}(E)$.
\noindent
We specify the degrees of freedom of \(\mathcal{W}^h_{l,E}\) as follows:
\begin{itemize}[labelwidth=!, labelindent=0pt, leftmargin=!, align=left]
\setlength{\itemsep}{0pt}
\setlength{\parsep}{0pt}
\setlength{\parskip}{0pt}
\item[$(Dof_1)$] For \(i = 1, \ldots, N_E\), the value of \(v_h\) at vertex \(E_i\);
\item[$(Dof_2)$] When \(l > 1\), for all \(e \in \partial E\), the values of \(v_h\) at \(l-1\) Gauss-Lobatto points on \(e\);
\item[$(Dof_3)$] When \(l > 1\), for all \(q_{l-2} \in \mathcal{M}_{l-2}(E)\), the moment \(\frac{1}{\mid E\mid} (v_h,q_{l-2})_{0,E}\).
\end{itemize}
\noindent Thanks to \cite{AhmadBA2013}, we know that $Ndof_{loc} := \dim(\mathcal{W}^h_{l,E}) = l \times N_E + \frac{l(l-1)}{2}$.
Furthermore, we can similarly define a computable $L^2$-projection operator $\Pi_{l,E}^{0} : L^2(E) \rightarrow \mathcal{P}_l(E)$. 
And define $\Pi_{l}^{\nabla}$ and $\Pi_{l}^{0}$ by $\Pi_{l}^{\nabla}\!\mid_{E} \! v = \Pi_{l,E}^{\nabla}v$ and $\Pi_{l}^{0}\!\mid_{E}\!v = \Pi_{l,E}^{0}v$, for all $E \in \mathcal{T}_h$, respectively. Based on $(Dof_1) - (Dof_3)$, let ${\pmb \chi}(\phi) = (\chi_1(\phi), \ldots, \chi_{Ndof_{loc}}(\phi))$ where $\chi_i(\phi)$ represents the $i$th local degree of freedom of the smooth function $\phi$. \\
\indent Now we introduce the VE approximation for the problem \eqref{a2.3}. Define the finite-dimensional space
\begin{eqnarray*}
\mathds{V}_h := \{(w_h,u_h) \in \mathcal{W}^h_{l} \times \mathcal{W}^h_{l} : (w_h-u_h) \mid_e = 0, \forall e \in \mathcal{E}_h \cap \partial\Omega \}.
\end{eqnarray*}
\noindent Let \(S_a^E(\cdot,\cdot)\) and \(S_b^E(\cdot,\cdot)\) be symmetric positive definite bilinear forms satisfying
\begin{eqnarray*}
c_0 a^E(v_h,v_h) \leq S_a^E(v_h,v_h) \leq c_1 a^E(v_h,v_h),~\forall v_h \mid_E \in \mathcal{W}^h_{l,E} ~\text{with} ~\Pi_{l,E}^{\nabla} v_h = 0, \\
c_2 b^E(v_h,v_h) \leq S_b^E(v_h,v_h) \leq c_3 b^E(v_h,v_h),~\forall v_h \mid_E \in \mathcal{W}^h_{l,E} ~\text{with} ~\Pi_{l,E}^{0} v_h = 0.
\end{eqnarray*}
\noindent As stated in \cite{Beirao2013,BeiraoBM2014}, \(S_a^E(\cdot,\cdot)\) and \(S_b^E(\cdot,\cdot)\) can be chosen as
$$
S_a^E(\varphi_h,\psi_h) = \sigma^E{\pmb \chi}(\varphi_h) \cdot {\pmb \chi}(\psi_h),~S_b^E(\varphi_h,\psi_h) = \tau^E \mid\! E \!\mid {\pmb \chi}(\varphi_h) \cdot {\pmb \chi}(\psi_h),
$$
here, $\sigma^E$ and $\tau^E$ denote the stabilization parameters. Then we define 
\begin{align*}
&a_h^E(\varphi_h,\psi_h) = a^E(\Pi_{l,E}^{\nabla} \varphi_h,\Pi_{l,E}^{\nabla} \psi_h) + S_a^E((I-\Pi_{l,E}^{\nabla})\varphi_h, (I-\Pi_{l,E}^{\nabla})\psi_h), \\
&b_h^E(n\varphi_h,\psi_h) = b^E(n\Pi_{l,E}^{0} \varphi_h, \Pi_{l,E}^{0} \psi_h) + n_0^E S_b^E((I-\Pi_{l,E}^{0})\varphi_h, (I-\Pi_{l,E}^{0})\psi_h),
\end{align*}
where $n_0^E$ is the constant approximation to $n({\boldsymbol x})$ on $E$.
\noindent Let ${\bf U}_h = (w_h,u_h)$, ${\pmb \Psi}_h = (\varphi_h,\psi_h)$, then the VE approximation of \eqref{a2.3} is
to find $({\bf U}_h,\lambda_h) \in \mathds{V}_h \backslash \{\bf 0\} \times \mathds{C}$ such that
\begin{eqnarray}
a_h({\bf U}_h,{\pmb \Psi}_h) = (\beta - \lambda_h) b_h({\bf U}_h,{\pmb \Psi}_h), \forall {\pmb \Psi}_h \in \mathds{V}_h, \label{a3.3}
\end{eqnarray}
where
\begin{align*}
a_h({\bf U}_h,{\pmb \Psi}_h):&=\sum_{E \in \mathcal{T}_h} a_h^E(w_h,\varphi_h)-\frac{1}{\gamma} a_h^E(u_h,\psi_h)-k^2 b_h^E(n w_h,\varphi_h)+\beta k^2 b_h^E(u_h,\psi_h), \\
b_h({\bf U}_h,{\pmb \Psi}_h):&= \sum_{E \in \mathcal{T}_h} k^2 b_h^E(u_h,\psi_h).
\end{align*}
The adjoint problem of \eqref{a3.3} is to find $\lambda_h^*\in \mathds{C}$ and ${\bf U}_h^*=(w_h^*,u_h^*) \in \mathds{V}_h \backslash \{ {\bf 0}\}$, such that
\begin{equation}\label{a3.4}
  a_h({\pmb \Psi},{\bf U}_h^*) = \overline{(\beta-\lambda_h^*)}b_h({\pmb \Psi},{\bf U}_h^*), \quad \forall ~ {\pmb \Psi} \in \mathds{V}_h,
\end{equation}
and the primal and adjoint eigenvalues are connected via $(\beta-\lambda_h)=\overline{(\beta-\lambda_h^*)}$.

Given ${\bf F}_h= (g_h,f_h) \in \mathds{V}_h$, the discrete source problem associated with \eqref{a3.3} is to seek ${\bf U}^{\bf F}_h = (w^f_h,u^f_h) \in \mathds{V}_h$ such that
\begin{eqnarray}
a_h({\bf U}^{\bf F}_h,{\pmb \Psi}_h) = b_h({\bf F}_h,{\pmb \Psi}_h), \forall {\pmb \Psi}_h \in \mathds{V}_h. \label{a3.5}
\end{eqnarray} 
Clearly, \(\mathcal{P}_{l}(E) \subset \mathcal{W}^h_{l,E}\). For all $E \in \mathcal{T}_h, v_h\mid_E \in \mathcal{W}^h_{l,E}$, and \(p_l \in \mathcal{P}_{l}(E)\), thanks to \cite{Beirao2013,ChenLHJ2018}, we have
\begin{align}
a_h^E(v_h,p_l) &= a^E(v_h,p_l),~b_h^E(v_h,p_l) = b^E(v_h,p_l), \label{a3.6} \\
\alpha_* a^E(v_h,v_h) &\leq a_h^E(v_h,v_h) \leq \alpha^* a^E(v_h,v_h), \label{a3.7} \\
c_* b^E(v_h,v_h) &\leq b_h^E(v_h,v_h) \leq c^* b^E(v_h,v_h). \label{a3.8}
\end{align}
\noindent Since \(\mathds{V}_h \subset \mathds{V}\), we also have the weak $\mathds{T}$-coercivity of $a_h(\cdot,\cdot)$.
\begin{lemma}\label{lem3.1} There exists a sufficiently large positive constant \(K\) such that
\begin{eqnarray}
\mathrm{Re}(a_h({\pmb \Psi}_h, \mathds{T}{\pmb \Psi}_h)) + K\| {\pmb \Psi}_h \|_{\mathds{W}}^2
\gtrsim \| {\pmb \Psi}_h \|_{\mathds{V}}^2
,~ \forall ~ {\pmb \Psi}_h \in \mathds{V}_h. \label{a3.9}
\end{eqnarray}
\end{lemma}
\begin{proof}
When $0 < {1}/{\gamma} < 1$, from the definitions of the operator \(\mathds{T}\) and \(a_h(\cdot, \cdot)\), along with
\eqref{a3.7} and \eqref{a3.8}, we deduce that
\begin{align*}
& \text{Re}(a_h({\pmb \Psi}_h,\mathds{T}{\pmb \Psi}_h))+ K\| {\pmb \Psi}_h \|_{\mathds{W}}^2 \\
& =\sum_{E \in \mathcal{T}_h} \text{Re} \left( a_h^E(\varphi_h,\varphi_h) -\frac{1}{\gamma} a_h^E(\psi_h,2\varphi_h-\psi_h)- k^2 b_h^E(n \varphi_h,\varphi_h)
+  \beta k^2 b_h^E(\psi_h,2\varphi_h-\psi_h) \right)\\
&\quad + \sum_{E \in \mathcal{T}_h} K(\varphi_h,\varphi_h)_E + K(\psi_h,\psi_h)_E \\
& =\sum_{E \in \mathcal{T}_h} \text{Re} \left( a_h^E(\varphi_h,\varphi_h) - 2 \frac{1}{\gamma} a_h^E(\psi_h,\varphi_h)+\frac{1}{\gamma} a_h^E(\psi_h,\psi_h)
- k^2 b_h^E(n \varphi_h,\varphi_h) \right) \\
& \quad + \sum_{E \in \mathcal{T}_h}  \text{Re}(2\beta k^2 b_h^E(\psi_h,\varphi_h)) +  K(\varphi_h,\varphi_h)_E + K(\psi_h,\psi_h)_E -\beta k^2 b_h^E(\psi_h,\psi_h)\\
& \geq \sum_{E \in \mathcal{T}_h} \alpha_* a^E(\varphi_h,\varphi_h) - 2 \frac{1}{\gamma} \text{Re} (a_h^E(\psi_h,\varphi_h)) 
+ \frac{1}{\gamma} \alpha_* a^E(\psi_h,\psi_h)  + 2\beta k^2 \text{Re}(b_h^E(\psi_h,\varphi_h)) \\
& \quad + \sum_{E \in \mathcal{T}_h}   K \|\varphi_h\|_{0,E}^2 - k^2 \max\{n^*,n_0^E\} c^* b^E(\varphi_h,\varphi_h)
+ K \|\psi_h\|_{0,E}^2 -\beta k^2 c^* b^E(\psi_h,\psi_h)\\
& \geq \sum_{E \in \mathcal{T}_h} \alpha_* \|\nabla \varphi_h\|_{0,E}^2 + \frac{1}{\gamma} \alpha_* \|\nabla \psi_h\|_{0,E}^2 +
(K-k^2 \max\{n^*,n_0^E\} c^*) \|\varphi_h\|_{0,E}^2   \\
&\quad + \sum_{E \in \mathcal{T}_h} (K- \beta k^2 c^*)\|\psi_h\|_{0,E}^2 - 2 \frac{1}{\gamma}  \mid a_h^E(\psi_h,\varphi_h)\mid
- 2 \beta k^2 \mid b_h^E(\psi_h,\varphi_h)\mid.
\end{align*}
\noindent By Young's inequality, for all \(\varepsilon_3, \varepsilon_4 > 0\), we have
$2\|\psi_h\|_{0,E} \|\varphi_h\|_{0,E} \leq \varepsilon_3 \|\psi_h\|_{0,E}^2 + \varepsilon_3^{-1} \|\varphi_h\|_{0,E}^2$
and
$2\|\nabla \psi_h\|_{0,E} \|\nabla \varphi_h\|_{0,E} \leq \varepsilon_4 \|\nabla \psi_h\|_{0,E}^2 + \varepsilon_4^{-1} \|\nabla \varphi_h\|_{0,E}^2$,
then, by the Cauchy-Schwarz inequality, we have
\begin{align*}
\text{Re}&(a_h({\pmb \Psi}_h,\mathds{T}{\pmb \Psi}_h)) + K\| {\pmb \Psi}_h \|_{\mathds{W}}^2\\
\geq &  \sum_{E \in \mathcal{T}_h} \alpha_* \|\nabla \varphi_h\|_{0,E}^2 + \frac{1}{\gamma} \alpha_* \|\nabla \psi_h\|_{0,E}^2 +
(K-k^2 \max\{n^*,n_0^E\} c^*) \|\varphi_h\|_{0,E}^2   \\
& + \sum_{E \in \mathcal{T}_h} (K- \beta k^2  c^*)\|\psi_h\|_{0,E}^2 - 2 \frac{1}{\gamma} \alpha^* \|\nabla \psi_h\|_{0,E} \|\nabla \varphi_h\|_{0,E} - 2\beta k^2  c^* \|\psi_h\|_{0,E} \| \varphi_h\|_{0,E} \\
\geq &  \sum_{E \in \mathcal{T}_h} (\alpha_*-\frac{1}{\gamma} \alpha^* \varepsilon_4^{-1})\|\nabla \varphi_h\|_{0,E}^2 + \frac{1}{\gamma} (\alpha_*-\alpha^* \varepsilon_4) \|\nabla \psi_h\|_{0,E}^2 \\
&+ \sum_{E \in \mathcal{T}_h} (K-k^2 \max\{n^*,n_0^E\} c^* - \beta k^2  c^* \varepsilon_3^{-1}) \|\varphi_h\|_{0,E}^2 +  (K- \beta k^2 c^* - \beta k^2 c^* \varepsilon_3)\|\psi_h\|_{0,E}^2.
\end{align*}
Choose $\frac{\beta k^2  c^*}{K-k^2 \max\{n^*,n_0^E\} c^*} < \varepsilon_3 < \frac{K- \beta k^2  c^*}{\beta k^2  c^*}$
and $\frac{1}{\gamma} < \frac{\alpha^*}{\gamma \alpha_*} < \varepsilon_4 < \frac{\alpha_*}{\alpha^*} < 1$,
and we get \eqref{a3.9} immediately. Similarly, we can prove the desired result in the case of ${1}/{\gamma} >1$. The proof is completed.  \hfill $\Box$
\end{proof}

\indent Consider now the VE approximation of \eqref{a2.9}: seek \(({\bf U}_h,k_h^2) \in \mathds{V}_h \backslash \{ \bf 0\}  \times \mathds{R}\)
such that
\begin{eqnarray}
a_h({\bf U}_h,{\pmb \Psi}_h)=0,\quad \forall {\pmb \Psi}_h \in \mathds{V}_h.\label{a3.10}
\end{eqnarray}
Since \(k^2\) is not an eigenvalue of \eqref{a2.9}, it is known that for sufficiently small \(h\), \(k_h^2\) is also
not an eigenvalue of \eqref{a3.10} and $k_h^2$ converges to $k^2$ (see \cite{MengMei2022}).
Therefore, from \eqref{a3.9} and the fact that $\mathds{T}$ is an isomorphism (noting that $\mathds{T}^2 =\mathds{I}$), by Fredholm's alternative we conclude that the problem \eqref{a3.5} is well-posed.
Hence, for ${\bf F}_h=({g_h,f_h}) \in \mathds{V}_h$,  we can define the corresponding discrete solution operator
$\widetilde{\mathcal{A}}_h=(\widetilde{S}_h,\widetilde{T}_h):~\mathds{V}_h \rightarrow \mathds{V}_h$ by
\begin{align*}
 a_h(\widetilde{\mathcal{A}}_h {\bf F}_h,{\pmb \Psi}_h) = b_h({\bf F}_h,{\pmb \Psi}_h),~ \forall ~ {\pmb \Psi}_h \in \mathds{V}_h,
\end{align*}
where
\begin{align*}
 \widetilde{\mathcal{A}}_h {\bf F}_h=(\widetilde{S}_h f_h,\widetilde{T}_h f_h)=(w^f_h,u^f_h)= {\bf U}^{\bf F}_h.
\end{align*}
We also denote $ \widetilde{\mathcal{A}}_h {\bf F}_h=\widetilde{\mathcal{A}}_h f_h$.
Additionally, it is valid that
\begin{align}
\|\widetilde{\mathcal{A}}_h {\bf F}_h\|_{\mathds{V}}\lesssim \|{\bf F}_h\|_b. \label{a3.11}
\end{align} 
Given ${\bf F}_h^* = (g_h^*,f_h^*) \in \mathds{V}_h $, the VE approximation of \eqref{a2.10} is to seek ${\bf U}^{\bf F^*}_h =$ $ (w^{f^*}_h,u^{f^*}_h) \in \mathds{V}_h$ such that
\begin{equation}\label{a3.12}
a_h({\pmb \Phi}_h,{\bf U}^{\bf F^*}_h) = b_h({\pmb \Phi}_h,{\bf F}_h^*),~ \forall ~ {\pmb \Phi}_h \in \mathds{V}_h.
\end{equation}
Similarly,  we can define the corresponding discrete adjoint solution operator $\widetilde{\mathcal{A}}_h^*=(\widetilde{S}_h^*,\widetilde{T}_h^*):~\mathds{V}_h  \rightarrow \mathds{V}_h$ by
\begin{align*}
&a_h({\pmb \Phi}_h,\widetilde{\mathcal{A}}^*_h {\bf F}^*_h) = b_h({\pmb \Phi}_h,{\bf F}^*_h),~ \forall ~ {\pmb \Phi}_h \in \mathds{V}_h, \\
&\widetilde{\mathcal{A}}^*_h {\bf F}^*_h=(\widetilde{S}_h^*f_h^*,\widetilde{T}_h^*f_h^*) = (w^{f^*}_h,u^{f^*}_h)= {\bf U}^{\bf F^*}_h.
\end{align*}
We also denote $\widetilde{\mathcal{A}}^*_h {\bf F}^*_h=\widetilde{\mathcal{A}}^*_h f_h^*$.\\
\indent To obtain a priori error estimates for eigenfunctions and eigenvalues using spectral approximation theory, we need to show that
$\| \widetilde{T}_h - T \|_{L^2(\Omega) \rightarrow L^2(\Omega)} \to 0$ as $ h \to 0$
and
$\|\widetilde{\mathcal{A}}_h - \mathcal{A} \|_{\mathds{V} \rightarrow \mathds{V}} \to 0$ as $h \to 0.$
However, $\widetilde{\mathcal{A}}_h$ is not well defined on $\mathds{V}$. To overcome this drawback, inspired by the work \cite{MoraR2020}, we introduce the projection operator
$\mathbf{P}_h=(P_h,P_h): \mathds{W} \to \mathds{V}_h \subset \mathds{W}$
with range $\mathds{V}_h$, defined by the property
$b(\mathbf{P}_h \mathbf{F} - \mathbf{F}, {\pmb \Psi}) = 0, ~ \forall {\pmb \Psi} \in \mathds{V}_h.$
Moreover, we have 
\begin{eqnarray}
\|P_h f\|_0 \leq \|f\|_0, \quad 
\|P_h f - f\|_0 = \inf_{f_h \in \mathcal{W}^h_{l}} \|f - f_h\|_0. \label{aa3.27}
\end{eqnarray}
With the aid of this operator, we introduce the following operator
$\mathcal{A}_h =(S_h,T_h):=  \widetilde{\mathcal{A}}_h \mathbf{P}_h: \mathds{W} \to \mathds{V}_h,$
which is well defined for any source term $\mathbf{F} \in \mathds{W}$. Furthermore, it is easy to verify that $\widetilde{\mathcal{A}}_h$ and $\mathcal{A}_h$ share the same nonzero spectrum and eigenfunctions. Following a similar approach, we can define the adjoint auxiliary solution operator $\mathcal{A}_h^*=(S_h^*,T_h^*)$.\\
\indent Next, we will derive the a priori error estimates for the VE approximation. From \cite{Beirao2013,Antonietti2022,ChenLHJ2018} we have the following projection error estimates and interpolation error estimates (For example, see the details in Proposition 7.1 in \cite{Antonietti2022}):
\noindent For every $ v \in H^t(\Omega) $, with $ 1 \leq t \leq l+1 $, there exists $ v_{\pi} \in \mathcal{P}_l(E) $ such that
\begin{equation}
\| v - v_\pi\|_{0,E} + h_E \mid v - v_\pi \mid_{1,E} \leq C h_E^t \mid v\mid_{t,E}; \label{a3.13}
\end{equation}
Moreover, for every $ v \in H^t(\Omega) $, with $ 1 \leq t \leq l+1 $, there exists $ v_I \in \mathcal{W}_l^h $ such that
\begin{equation}
\| v - v_I\|_{0,E} + h_E \mid v - v_I \mid_{1,E} \leq C h_E^t \mid v\mid_{t,E}. \label{a3.14}
\end{equation}
Notice that $v_\pi$ is defined element by element, and does not belong to the space $ H^1(\Omega) $. We shall denote its broken $H^1$ norm by $\| v_\pi \|_{1,h}:=\left(\sum_{E \in \mathcal{T}_h} \| v_\pi \|_{1,E}^2 \right)^{1/2}$.
\begin{lemma}\label{lem3.2}
Suppose $n$ is a piecewise smooth function.
For each ${\pmb \Psi}=(\varphi,\psi) \in \mathbf{\mathds{V}}$, let ${\pmb \Psi}_\pi = (\varphi_\pi, \psi_\pi)$ with $\varphi_\pi\mid_E \in \mathcal{P}_l(E)$ and $\psi_\pi\mid_E \in \mathcal{P}_l(E)$ satisfying \eqref{a3.13}.
Then, it is valid for any ${\pmb \Psi}_h \in \mathbf{\mathds{V}}_h$ that
\begin{align}
a_h({\pmb \Psi}_h, {\pmb \Psi}_\pi) - a({\pmb \Psi}_h, {\pmb \Psi}_\pi)
\lesssim \| n \varphi_\pi - \Pi_{l}^{0} (n \varphi_\pi) \|_{0}\| \varphi_h - \Pi_{l}^{0} \varphi_h\|_{0},\label{a3.15} \\
a_h({\pmb \Psi}_\pi,{\pmb \Psi}_h) - a({\pmb \Psi}_\pi,{\pmb \Psi}_h)
\lesssim \| n \varphi_\pi - \Pi_{l}^{0} (n \varphi_\pi) \|_{0}\| \varphi_h - \Pi_{l}^{0} \varphi_h\|_{0}. \label{pa3.15} 
\end{align}
\end{lemma}
\begin{proof}
Utilizing the definitions of $a_h(\cdot,\cdot)$ and $a(\cdot,\cdot)$, along with \eqref{a3.6}, we find
\begin{align*}
&a_h({\pmb \Psi}_h,{\pmb \Psi}_\pi) - a({\pmb \Psi}_h,{\pmb \Psi}_\pi) \\
&= \sum_{E \in \mathcal{T}_h} a_h^E(\varphi_h,\varphi_\pi) - \frac{1}{\gamma} a_h^E(\psi_h,\psi_\pi) - k^2 b_h^E(n \varphi_h,\varphi_\pi) + \beta k^2 b_h^E(\psi_h,\psi_\pi)\\
&\quad -  \sum_{E \in \mathcal{T}_h} a^E(\varphi_h,\varphi_\pi) + \frac{1}{\gamma} a^E(\psi_h,\psi_\pi) + k^2 b^E(n \varphi_h,\varphi_\pi) - \beta k^2 b^E(\psi_h,\psi_\pi)\\
&=  \sum_{E \in \mathcal{T}_h} - k^2 b_h^E(n \varphi_h,\varphi_\pi) + k^2 b^E(n \varphi_h,\varphi_\pi),
\end{align*}
then, according to the definition of $b_h^E(\cdot,\cdot)$ and a proof similar to Lemma 5.3 in \cite{BeirVBM2016}, we derive
\begin{align*}
&a_h({\pmb \Psi}_h,{\pmb \Psi}_\pi) - a({\pmb \Psi}_h,{\pmb \Psi}_\pi) \\
&= k^2\sum_{E \in \mathcal{T}_h}  b^E(n \varphi_h,\varphi_\pi) - b^E(n\Pi_{l,E}^{0} \varphi_h, \Pi_{l,E}^{0} \varphi_\pi)\\
&\lesssim  \| n \varphi_h - \Pi_{l}^{0} (n \varphi_h) \|_{0}\| \varphi_\pi - \Pi_{l}^{0} \varphi_\pi\|_{0}
+ \| n \varphi_\pi - \Pi_{l}^{0} (n \varphi_\pi) \|_{0}\| \varphi_h - \Pi_{l}^{0} \varphi_h\|_{0} \\
&~~~+  \| \varphi_\pi - \Pi_{l}^{0} \varphi_\pi \|_{0}\| \varphi_h - \Pi_{l}^{0} \varphi_h\|_{0},
\end{align*}
where $\| \varphi_\pi - \Pi_{l}^{0} \varphi_\pi \|_{0}=0$, then we get \eqref{a3.15}. Using a similar argument, \eqref{pa3.15} can be proved. The proof is completed. \hfill $\Box$
\end{proof}
\begin{lemma}\label{lem3.3} Assuming that the conditions of Lemma $\ref{lem3.2}$ hold. Let ${\bf U}^{\bf F}=(w^f,u^f)$ and ${\bf U}^{\bf F}_h:= \mathcal{A}_h \mathbf{F} = \widetilde{\mathcal{A}}_h \mathbf{P}_h \mathbf{F} $ be the solution of \eqref{a2.8} and \eqref{a3.5}, respectively. Then there holds
\begin{align}
&\| {\bf U}^{\bf F} - {\bf U}^{\bf F}_h \|_{\mathds{W}} \lesssim  h^{r} \{ \|{\bf U}^{\bf F} - {\bf U}^{\bf F}_h\|_{\mathds{V}} + \|w^f - w_\pi^f\|_{1,h} + \|u^f - u_\pi^f\|_{1,h} + \nonumber \\
& ~h\{ \|f-f_I\|_0 +\|f-\Pi_{l}^{0}f\|_{0} + \| w^f- \Pi_{l}^{0}w^f \|_{0} +  \| n w^f- \Pi_{l}^{0}(n w^f) \|_{0}  \}\}, \label{a3.16}
\end{align}
where $w_\pi^f$ and $u_\pi^f$ are the approximations of $w^f$ and $u^f$ satisfying \eqref{a3.13}, respectively, and $f_I$ is the interpolation of $f$ satisfying \eqref{a3.14}.
\end{lemma}
\begin{proof}We use the duality argument to complete the proof. First, introduce the following auxiliary problem: for any \({\bf \widehat{G}} = (\hat{f},\hat{g}) \in \mathds{W}\), find \({\pmb \Phi}=(\phi,z) \in \mathds{V}\) such that
\begin{eqnarray*}
a({\pmb \Psi},{\pmb \Phi})=B({\bf \widehat{G}},{\pmb \Psi}), \quad \forall {\pmb \Psi} \in \mathds{V},
\end{eqnarray*}
where \(B({\bf \widehat{G}},{\pmb \Psi})=\int_\Omega {\bf \widehat{G}}\cdot {\boldsymbol {\overline{\Psi}}}\). Since \(k^2\) is not an eigenvalue of \eqref{a2.9},
by Fredholm's alternative we know the above auxiliary problem has a unique solution \({\pmb \Phi}\), and
$$\|\phi\|_{1+r} + \|z\|_{1+r} \lesssim \|{\bf \widehat{G}}\|_{\mathds{W}}~~(\frac{1}{2} < r \leq 1).$$
Let \({\pmb \Psi}={\bf U}^{\bf F} - {\bf U}^{\bf F}_h\), then
\begin{eqnarray*}
B({\bf \widehat{G}},{\bf U}^{\bf F} - {\bf U}^{\bf F}_h)= a({\bf U}^{\bf F} - {\bf U}^{\bf F}_h,{\pmb \Phi}) = a({\bf U}^{\bf F} - {\bf U}^{\bf F}_h,{\pmb \Phi} - {\pmb \Phi}_I ) + a({\bf U}^{\bf F} - {\bf U}^{\bf F}_h,{\pmb \Phi}_I ),
\end{eqnarray*}
where \({\pmb \Phi}_I=(\phi_I,z_I) \in \mathbf{\mathds{V}}_h \) is the interpolation of \({\pmb \Phi}\).
Using the continuity of \(a(\cdot, \cdot)\) and \eqref{a3.14}, we obtain
\begin{eqnarray}
a({\bf U}^{\bf F} - {\bf U}^{\bf F}_h,{\pmb \Phi} - {\pmb \Phi}_I )
 \lesssim \|{\pmb \Phi} - {\pmb \Phi}_I\|_{\mathds{V}} \|{\bf U}^{\bf F} - {\bf U}^{\bf F}_h\|_{\mathds{V}} \lesssim h^r \|{\bf \widehat{G}}\|_{\mathds{W}} \|{\bf U}^{\bf F} - {\bf U}^{\bf F}_h\|_{\mathds{V}}. \label{a3.17}
\end{eqnarray}
\noindent From \eqref{a2.8} and the definition of $\mathcal{A}_h$, we have 
\begin{eqnarray*}
 a({\bf U}^{\bf F} - {\bf U}^{\bf F}_h,{\pmb \Phi}_I )  = \{ b({\bf F},{\pmb \Phi}_I)-b_h(\mathbf{P}_h {\bf F},{\pmb \Phi}_I) \} +
 \{ a_h({\bf U}^{\bf F}_h,{\pmb \Phi}_I) - a({\bf U}^{\bf F}_h,{\pmb \Phi}_I) \}.
\end{eqnarray*}
\noindent According to the definition of $b_h^E(\cdot,\cdot)$ and $\mathbf{P}_h$, \eqref{a3.6}, \eqref{a3.13}, \eqref{a3.14} and \eqref{aa3.27}, we deduce that
\begin{eqnarray}
 && b({\bf F},{\pmb \Phi}_I)-b_h({\mathbf{P}_h {\bf F}},{\pmb \Phi}_I) \nonumber\\
 && \lesssim  \sum_{E \in \mathcal{T}_h} b^E(P_hf,z_I) - b_h^E(P_hf,z_I)\nonumber\\
 && \lesssim  \sum_{E \in \mathcal{T}_h} b^E(P_h f-\Pi_{l,E}^{0}f,z_I-\Pi_{l,E}^{0}z_{I}) - b_h^E(P_hf-\Pi_{l,E}^{0}f,z_I-\Pi_{l,E}^{0}z_{I})\nonumber\\
 && \lesssim  h^{1+r} \|P_h f-\Pi_{l}^{0}f\|_{0} \|{\bf \widehat{G}}\|_{\mathds{W}} \nonumber\\
 && \lesssim  h^{1+r} \{ \|f-f_I\|_{0}+\|f-\Pi_{l}^{0}f\|_{0} \}\|{\bf \widehat{G}}\|_{\mathds{W}}, \label{a3.18}
\end{eqnarray}
where $f_I$ is the interpolation of $f$ that satisfies \eqref{a3.14}.\\
\indent Let ${\pmb \Phi}_\pi$ and ${\bf U}^{\bf F}_{\pi}$ be the approximation of ${\pmb \Phi}$ and ${\bf U}^{\bf F}$ that satisfy \eqref{a3.13}, respectively. Using \eqref{a3.6}, we derive that
\begin{align*}
a_h({\bf U}^{\bf F}_h,{\pmb \Phi}_I) - a({\bf U}^{\bf F}_h,{\pmb \Phi}_I) 
&= a_h({\bf U}^{\bf F}_h-{\bf U}^{\bf F}_{\pi},{\pmb \Phi}_I-{\pmb \Phi}_\pi) - a({\bf U}^{\bf F}_h-{\bf U}^{\bf F}_{\pi},{\pmb \Phi}_I-{\pmb \Phi}_\pi) \\
 &\quad+ a_h({\bf U}^{\bf F}_{h},{\pmb \Phi}_\pi) - a({\bf U}^{\bf F}_{h},{\pmb \Phi}_\pi)
 +a_h({\bf U}^{\bf F}_\pi,{\pmb \Phi}_I) - a({\bf U}^{\bf F}_\pi,{\pmb \Phi}_I).
\end{align*}
Using the Cauchy-Schwarz inequality, \eqref{a3.7}, \eqref{a3.8}, \eqref{a3.15}, \eqref{pa3.15}, \eqref{a3.13}, \eqref{a3.14}, and the continuity of $a(\cdot,\cdot)$, we derive
\begin{align}
&a_h({\bf U}^{\bf F}_h,{\pmb \Phi}_I) - a({\bf U}^{\bf F}_h,{\pmb \Phi}_I)\nonumber\\
& \lesssim  \{ \|{\bf U}^{\bf F} - {\bf U}^{\bf F}_h\|_{\mathds{V}} + \|w^f - w_\pi^f\|_{1,h} + \|u^f - u_\pi^f\|_{1,h} \} \|{\pmb \Phi}_I-{\pmb \Phi}_\pi\|_\mathds{V} \nonumber\\
&\quad + \| n \phi_\pi- \Pi_{l}^{0} (n \phi_\pi) \|_{0} \| w_h^f- \Pi_{l}^{0}w_h^f \|_{0} 
+ \| n w_\pi^f- \Pi_{l}^{0} (n w_\pi^f) \|_{0} \| \phi_I- \Pi_{l}^{0} \phi_I \|_{0} \nonumber\\
& \lesssim h^{r} \{ \|{\bf U}^{\bf F} - {\bf U}^{\bf F}_h\|_{\mathds{V}} + \|w^f - w_\pi^f\|_{1,h} + \|u^f - u_\pi^f\|_{1,h} \} \|{\bf \widehat{G}}\|_\mathds{W} \nonumber\\
&\quad+ h^{1+r} \{\| w^f- \Pi_{l}^{0}w^f \|_{0} + \| n w^f- \Pi_{l}^{0}(n w^f) \|_{0} \}\|{\bf \widehat{G}}\|_{\mathds{W}},\label{a3.19} 
\end{align}
then, by combining \eqref{a3.17}, \eqref{a3.18}, and \eqref{a3.19}, we obtain
\begin{align*}
\| {\bf U}^{\bf F} - {\bf U}^{\bf F}_h \|_{\mathds{W}} &= \sup_{{\bf 0} \neq {\bf \widehat{G}} \in \mathds{W}}
\frac{\mid B({\bf \widehat{G}},{\bf U}^{\bf F} - {\bf U}^{\bf F}_h)\mid}{\|{\bf \widehat{G}}\|_{\mathds{W}}}\\
&\lesssim  h^{r} \{ \|{\bf U}^{\bf F} - {\bf U}^{\bf F}_h\|_{\mathds{V}} + \|w^f - w_\pi^f\|_{1,h} + \|u^f - u_\pi^f\|_{1,h}+ h \{\|f-f_I\|_0  \\
&\quad + \|f-\Pi_{l}^{0}f\|_{0} + \| w^f- \Pi_{l}^{0}w^f \|_{0} + \| n w^f- \Pi_{l}^{0}(n w^f) \|_{0} \}\} ,
\end{align*}
i.e., \eqref{a3.16} is valid. The proof is completed.  \hfill $\Box$
\end{proof}
\begin{remark}
When \(n\) is a constant or piecewise constant, \eqref{a3.16} simplifies to
\begin{align*}
\| {\bf U}^{\bf F} - {\bf U}^{\bf F}_h \|_{\mathds{W}} & \lesssim  h^{r} \{ \|{\bf U}^{\bf F} - {\bf U}^{\bf F}_h\|_{\mathds{V}} + \|w^f - w_\pi^f\|_{1,h} + \|u^f - u_\pi^f\|_{1,h} \\
&\quad + h(\|f-f_I\|_0 +\|f-\Pi_{l}^{0}f\|_0) \}.
\end{align*}
\end{remark}
\begin{lemma}\label{lem3.4}
Suppose that $f\in H^{1+\iota}(\Omega)~(-1\leq \iota \leq s)$. Let ${\bf U}^{\bf F}=(w^f,u^f)$ and ${\bf U}^{\bf F}_h=\mathcal{A}_h \mathbf{F} = \widetilde{\mathcal{A}}_h \mathbf{P}_h \mathbf{F} $ be the solution of \eqref{a2.8} and \eqref{a3.5},
respectively, and ${\bf U}^{\bf F} \in H^{1+s}(\Omega) \times H^{1+s}(\Omega)~( r\leq s \leq l )$.  Then there holds 
\begin{eqnarray}
 \| {\bf U}^{\bf F} - {\bf U}^{\bf F}_h \|_{\mathds{V}} \lesssim  h^s \{ \| w^f \|_{1+s} + \| u^f \|_{1+s} \}  + {h\|f-\Pi_{l}^{0}f\|_{0}}+ h\|f-f_I\|_0.\label{a3.20}
\end{eqnarray}
\end{lemma}
\begin{proof}
Denote $\hat{w}_h = w^f_h - w^f_I$ and $\hat{u}_h = u^f_h - u^f_I$, where $(w^f_I,u^f_I)={\bf U}^{\bf F}_I$ is the interpolations of ${\bf U}^{\bf F}$. Then, from the triangle inequality, we obtain
\begin{eqnarray*}
\| {\bf U}^{\bf F} - {\bf U}^{\bf F}_h \|_{\mathds{V}} \leq  \| {\bf U}^{\bf F} - {\bf U}^{\bf F}_I \|_{\mathds{V}} +  \{ \| \hat{w}_h \|_{1}^2 +   \| \hat{u}_{h} \|_{1}^2 \}^{1/2} := I + I\!I.
\end{eqnarray*}
For the first part $I$, based on the interpolation error estimate \eqref{a3.14}, we obtain
\begin{eqnarray*}
I = \{ \| w^f-w^f_I \|_{1}^2 + \| u^f-u^f_I \|_{1}^2 \}^{1/2}  \lesssim  h^s \{ \| w^f \|_{1+s} + \| u^f \|_{1+s} \} .
\end{eqnarray*}
For the second part $I\!I$, assuming $0 < {1}/{\gamma} < 1$, and using \eqref{a3.9}, we have
\begin{align*}
&I\!I^2 = \| \hat{w}_h \|_{1}^2 +   \| \hat{u}_{h} \|_{1}^2 \\
& \leq C\{\text{Re} (a_h((\hat{w}_h,\hat{u}_{h}),\mathds{T}(\hat{w}_h,\hat{u}_{h}))) + K \| \hat{w}_h \|_{0}^2 + K \| \hat{u}_h \|_{0}^2 \}\\
& = C \{  \text{Re} (a_h((\hat{w}_h,\hat{u}_{h}),(\hat{w}_h,2\hat{w}_h-\hat{u}_{h}))) + K \| \hat{w}_h \|_{0}^2 + K \| \hat{u}_h \|_{0}^2 \}\\
& = C \{  \text{Re} (2 a_h((\hat{w}_h,\hat{u}_{h}),(\hat{w}_h,\hat{w}_h))-a_h((\hat{w}_h,\hat{u}_{h}),(\hat{w}_h,\hat{u}_{h}))) + K \| \hat{w}_h \|_{0}^2 + K \| \hat{u}_h \|_{0}^2 \}\\
& = C \{ 2 \text{Re}\{ a_h(({w}^f_h,{u}^f_h),(\hat{w}_h,\hat{w}_h)) - a_h(({w}^f_I,{u}^f_I),(\hat{w}_h,\hat{w}_h)) \} \\
&\quad - \text{Re} (a_h(({w}^f_h,{u}^f_h),(\hat{w}_h,\hat{u}_{h}))) 
  + \text{Re} (a_h(({w}^f_I,{u}^f_I),(\hat{w}_h,\hat{u}_{h}))) + K \| \hat{w}_h \|_{0}^2 + K \| \hat{u}_h \|_{0}^2 \}.
\end{align*}
Furthermore, from Lemma \ref{lem3.3}, we know that $\{ \| \hat{w}_h \|_{0}^2 +  \| \hat{u}_h \|_{0}^2 \}$ is a higher-order term compared
with $\{\| \hat{w}_h \|_{1}^2 +  \| \hat{u}_h \|_{1}^2 \}$. According to \eqref{a2.8} and the definition of $\mathcal{A}_h$, and from \eqref{a3.13} we have
\begin{eqnarray*}
I\!I^2 &\lesssim & \left\{ \| \hat{w}_h \|_{1} + \| \hat{u}_{h} \|_{1} \right \}^2 \\
&\lesssim & 2 \text{Re} \{a_h(({w}^f_h,{u}^f_h),(\hat{w}_h,\hat{w}_h)) - a_h(({w}^f_I,{u}^f_I),(\hat{w}_h,\hat{w}_h)) \} \\
&~&+~ \text{Re} \{ a_h(({w}^f_I,{u}^f_I),(\hat{w}_h,\hat{u}_{h}))
- a_h(({w}^f_h,{u}^f_h),(\hat{w}_h,\hat{u}_{h})) \},
\end{eqnarray*}
where
\begin{eqnarray*}
&& 2\text{Re} \{a_h(({w}^f_h,{u}^f_h),(\hat{w}_h,\hat{w}_h)) - a_h(({w}^f_I,{u}^f_I),(\hat{w}_h,\hat{w}_h)) \} \\
&& \lesssim  \text{Re} \left\{ b_h((0,P_h f),(\hat{w}_h,\hat{w}_h)) - a_h(({w}^f_I-w^f_\pi,{u}^f_I-u^f_\pi),(\hat{w}_h,\hat{w}_h))  \right. \\
&& ~~~-  \left. a_h((w^f_\pi,u^f_\pi),(\hat{w}_h,\hat{w}_h)) - b((0,f),(\hat{w}_h,\hat{w}_h)) + a(({w}^f,{u}^f),(\hat{w}_h,\hat{w}_{h})) \right\}\\
&&\lesssim   \text{Re} \left\{b_h((0,P_h f),(\hat{w}_h,\hat{w}_h)) - b((0,f),(\hat{w}_h,\hat{w}_h))  \right\}   \\
&& ~~~-  \text{Re} \{  a_h(({w}^f_I-w^f_\pi,{u}^f_I-u^f_\pi),(\hat{w}_h,\hat{w}_h)) + a((w^f_\pi-{w}^f,u^f_\pi-{u}^f),(\hat{w}_h,\hat{w}_{h}))  \\
&& ~~~   +a_h((w^f_\pi,u^f_\pi),(\hat{w}_h,\hat{w}_h)) - a((w^f_\pi,u^f_\pi),(\hat{w}_h,\hat{w}_{h})) \} \\
&&:= I^{\prime} - I\!I^{\prime}.
\end{eqnarray*}
Next, we will estimate $I^{\prime}$ and $I\!I^{\prime}$ separately.
For $I^{\prime}$, 
using the definition of $\mathbf{P}_h$, \eqref{a3.13}, \eqref{a3.6}, \eqref{aa3.27} and the Cauchy-Schwarz inequality, similarly to the argument in \eqref{a3.18}, then
\begin{eqnarray*}
I^{\prime}  =  k^2 \sum_{E \in \mathcal{T}_h}  \text{Re} (b_h^E(P_h f, \hat{w}_h) - b^E(f, \hat{w}_h) )
 \lesssim  h (\|f-f_I\|_0+\|f-\Pi_{l}^0f\|_0) \| \hat{w}_h \|_1.
\end{eqnarray*}
For $I\!I^{\prime}$, using the Cauchy-Schwarz inequality, \eqref{a3.7}, \eqref{a3.8}, \eqref{pa3.15}, \eqref{a3.13} and \eqref{a3.14}, similarly to the argument in \eqref{a3.19}, we have
\begin{eqnarray*}
I\!I^{\prime} & \lesssim & 
\| \hat{w}_h \|_{1}  \left\{ \|w^f - {w}^f_I\|_{1} + \|  u^f - {u}^f_I \|_{1} + \| w^f_\pi - {w}^f \|_{1,h} + \| u^f_\pi - {u}^f \|_{1,h} \right\}\\
& \quad+ & h\| \hat{w}_h \|_{1} \{ \| n w_\pi^f - n w^f \|_0 +  \| n w^f - \Pi_{l}^{0} (n w^f) \|_0 \} \\
& \lesssim & h^s \{ \| w^f \|_{1+s} + \| u^f \|_{1+s} \} \| \hat{w}_h \|_{1}.
\end{eqnarray*}
Additionally, because of
\begin{eqnarray*}
&& \text{Re} (a_h(({w}^f_I,{u}^f_I),(\hat{w}_h,\hat{u}_{h}))- a_h(({w}^f_h,{u}^f_h),(\hat{w}_h,\hat{u}_{h}))) \\
&& = \text{Re} \{ b((0,P_h f),(\hat{w}_h,\hat{u}_{h})) -b_h((0,f),(\hat{w}_h,\hat{u}_{h})) \}\\
&&~~~+ \text{Re} \{ a_h(({w}^f_I - w^f_\pi,{u}^f_I - u^f_\pi),(\hat{w}_h,\hat{u}_{h})) + a((w^f_\pi -{w}^f,u^f_\pi-{u}^f),(\hat{w}_h,\hat{u}_{h})) \}\\
&&~~~+  \text{Re} \{ a_h((w^f_\pi,u^f_\pi),(\hat{w}_h,\hat{u}_{h})) - a((w^f_\pi,u^f_\pi),(\hat{w}_h,\hat{u}_{h})) \} ,
\end{eqnarray*}
a similar analysis to $I^{\prime}$ and $I\!I^{\prime}$ yields
\begin{eqnarray*}
&& \text{Re} (a_h(({w}^f_I,{u}^f_I),(\hat{w}_h,\hat{u}_{h}))- a_h(({w}^f_h,{u}^f_h),(\hat{w}_h,\hat{u}_{h})))\\
&& \lesssim  h (\|f-f_I\|_0+\|f-\Pi_{l}^0f\|_0) \| \hat{u}_h \|_1 \\
&& \quad +h^s \{ \| w^f \|_{1+s} + \| u^f \|_{1+s} \}   \{ \| \hat{w}_h \|_{1} + \|\hat{u}_h \|_{1}\}
\end{eqnarray*}
thus,
\begin{eqnarray*}
 I\!I \lesssim  h^s \{ \| w^f \|_{1+s} + \| u^f \|_{1+s} \} +  h \|f-\Pi_{l}^{0}f\|_{0}+h\|f-f_I\|_0,
\end{eqnarray*}
which together with the estimate for $I$ yields \eqref{a3.20}.  \hfill $\Box$
\end{proof}
\indent Using Lemmas \ref{lem3.3}-\ref{lem3.4}, \eqref{a3.13} and \eqref{a3.14}, it is easy to show that $\| T_h - T \|_{L^2(\Omega) \rightarrow L^2(\Omega)}$ $ \to 0$. In fact, for $\forall f \in L^2(\Omega)$, we have
\begin{align*}
\| T_h - T \|_{L^2(\Omega) \rightarrow L^2(\Omega)} & = \sup_{0 \neq f \in  L^2(\Omega)} \frac{\|T_h f - Tf\|_{0}}{\| f \|_{0}}
=\sup_{0 \neq f \in  L^2(\Omega)} \frac{\|u^f_h - u^f\|_{0}}{\| f \|_{0}}\\ & \lesssim  h^{2r} \to 0 \text{ as } h \to 0.
\end{align*}
Using Lemma \ref{lem3.4}, we can similarly prove
$\| \mathcal{A}_h - \mathcal{A} \|_{\mathds{V} \rightarrow \mathds{V}} \to 0$. In fact, for $\forall {\bf F} \in \mathds{V}$, we have
\begin{align*}
\| \mathcal{A}_h - \mathcal{A} \|_{\mathds{V} \rightarrow \mathds{V}} &= \sup_{0 \neq {\bf F} \in  \mathds{V}} \frac{\|\mathcal{A}_h {\bf F} - \mathcal{A}{\bf F}\|_{\mathds{V}}}{\| {\bf F} \|_{\mathds{V}}}
= \sup_{0 \neq {\bf F} \in  \mathds{V}} \frac{\|\widetilde{\mathcal{A}}_h {\bf P}_h {\bf F} - \mathcal{A}{\bf F} \|_{\mathds{V}}}{\| {\bf F} \|_{\mathds{V}}} \\
&= \sup_{0 \neq {\bf F} \in  \mathds{V}} \frac{\| {\bf U}^{\bf F}_h - {\bf U}^{\bf F}\|_{\mathds{V}}}{\| {\bf F} \|_{\mathds{V}}} \lesssim  \sup_{0 \neq {\bf F} \in  \mathds{V}} \frac{h^{r}\|{\bf F}\|_\mathds{W}}{\|{\bf F}\|_\mathds{V}}
 \lesssim  h^{r} \to 0 \text{ as } h \to 0.
\end{align*}

Let $\lambda_j$ and $\lambda_{j,h}$ denote the $j$-th eigenvalue of \eqref{a2.3} and \eqref{a3.3}, respectively (the eigenvalues are sorted in ascending order based on their modulus, and when the moduli are equal, sorted in ascending order based on the size of their imaginary parts), and let the algebraic multiplicity of $\lambda_j$ be $q$, i.e., $\lambda_{j-1} \neq \lambda_{j} = \lambda_{j+1} = \cdots = \lambda_{j+q-1} \neq \lambda_{j+q}$.
Let $M(\lambda_j)$ be the space spanned by the eigenfunctions of \eqref{a2.3} corresponding to the eigenvalue $\lambda_j$,
and $M_h(\lambda_j)$ be the direct sum of the eigenspaces corresponding to the eigenvalues of \eqref{a3.3} that converge to $\lambda_j$.
Let $R(\lambda_j)$ be the space spanned by the eigenfunctions of $T$ corresponding to the eigenvalue $\lambda_j$,
and $R_{h}(\lambda_{j})$ be the direct sum of the eigenspaces corresponding to the eigenvalues of $T_{h}$ that converge to $\lambda_j$.
Obviously, if $(\lambda_j, {\bf U}_j=(w_j,u_j))$ is an eigenpair of $\eqref{a2.3}$, then $(\lambda_j, u_j)$ is an eigenpair of $T$;
The definitions of $M^*(\lambda_j^{*})$, $M_{h}^*(\lambda_j^{*})$, $R^*(\lambda_j^{*})$ and $R_{h}^*(\lambda_j^{*})$
are analogous to $M(\lambda_j)$, $M_{h}(\lambda_j)$, $R(\lambda_j)$  and $R_{h}(\lambda_{j})$, respectively.\\
\indent For two closed subspaces ${\bf \mathcal{M}}$ and ${\bf \mathcal{N}}$ of $\mathds{V}$, we recall the definition of the gap $ \widehat{\delta}_1$ between  ${\bf \mathcal{M}}$ and ${\bf \mathcal{N}}$ in the sense of norm
$\| \cdot \|_\mathds{V}$, that is
$$ \delta_1({\bf \mathcal{M}},{\bf \mathcal{N}})= \sup_{{\pmb \Phi} \in {\bf \mathcal{M}}, \|{\pmb \Phi}\|_\mathds{V}=1}
\inf_{{\pmb \Psi} \in {\bf \mathcal{N}}} \| {\pmb \Phi} - {\pmb \Psi} \|_\mathds{V},~
\widehat{\delta_1}({\bf \mathcal{M}},{\bf \mathcal{N}})= \max(\delta_1({\bf \mathcal{M}},{\bf \mathcal{N}}),\delta_1({\bf \mathcal{N}},{\bf \mathcal{M}})).
$$
\noindent Similarly, we can define the gap $\widehat{\delta}_0$ between two subspaces $\mathbf{\mathcal{M}}$ and $\mathbf{\mathcal{N}}$ of $L^2(\Omega)$ in the sense of the norm $\|\cdot\|_0$.\\
\indent Denote $ \widehat{\lambda}_{j,h}=\frac{1}{q} \sum_{i=j}^{j+q-1} \lambda_{i,h}$. Since $\| T_h - T \|_{L^2(\Omega) \rightarrow L^2(\Omega)} \to 0$, $\| \mathcal{A}_h - \mathcal{A} \|_{\mathds{V} \rightarrow \mathds{V}} \to 0$, using Lemmas \ref{lem3.3} and \ref{lem3.4}
we obtain the following a priori error estimate for eigenpairs according to the spectral approximation theory (see \cite{Ba1991}).
\begin{theorem} \label{th3.1}
Let $\lambda_j$ and $\lambda_{j,h}$ be the $j$-th eigenvalue of \eqref{a2.3} and \eqref{a3.3}, respectively, and suppose that $M(\lambda_j) \subset H^{1+s}(\Omega) \times H^{1+s}(\Omega) (r \leq s \leq l)$. 
Then, when $n$ is a piecewise smooth function, there hold
\begin{align}
\widehat{\delta}_1(M(\lambda_j),M_h(\lambda_j)) & \lesssim h^s, \label{a3.21}\\
\widehat{\delta}_0(R(\lambda_j),R_h(\lambda_j))& \lesssim h^{r +s}, \label{a3.22}\\
\mid \lambda_j - \widehat{\lambda}_{j,h} \mid &\lesssim h^{2 s}. \label{a3.23}
\end{align}
\end{theorem}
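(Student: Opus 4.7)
The plan is to invoke the spectral approximation framework of Babu\v{s}ka and Osborn \cite{Ba1991} for compact operators, leveraging the operator-norm convergences $\| T_h - T \|_{L^2 \to L^2} \to 0$ and $\| \mathcal{A}_h - \mathcal{A} \|_{\mathbb{V} \to \mathbb{V}} \to 0$ already established immediately before the theorem. Each of the three estimates will be reduced to a source-problem estimate from Lemma \ref{lem3.3} or Lemma \ref{lem3.4} applied to eigenfunctions (either primal or adjoint), whose higher regularity is assumed in the hypothesis $M(\lambda_j) \subset H^{1+s}(\Omega) \times H^{1+s}(\Omega)$.

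For \eqref{a3.21}, I would first use the correspondence $(\lambda_j, \mathbf{U}_j) \leftrightarrow (\mu_j, \mathbf{U}_j)$ with $\mu_j = (\beta-\lambda_j)^{-1}$ between eigenpairs of \eqref{a2.3} and eigenpairs of $\mathcal{A}$. Since $\mathcal{A}$ is compact on $\mathbb{V}$, the Babu\v{s}ka-Osborn theory gives
\[
\widehat{\delta}_1(M(\lambda_j), M_h(\lambda_j)) \;\lesssim\; \bigl\| (\mathcal{A} - \mathcal{A}_h)\big|_{M(\lambda_j)} \bigr\|_{\mathbb{V} \to \mathbb{V}}.
\]
Applying Lemma \ref{lem3.4} with $\mathbf{F} = \mathbf{U}_j \in M(\lambda_j)$, and noting that the second component $u_j$ (playing the role of $f$) lies in $H^{1+s}$ so that $h\|f - \Pi_l^0 f\|_0 \lesssim h^{1+\min(l,s)}$ is of higher order, yields the rate $h^{\min(l,s)}$.

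For \eqref{a3.22}, I would apply the same framework to $T, T_h$ on $L^2(\Omega)$, obtaining
\[
\widehat{\delta}_0(R(\lambda_j), R_h(\lambda_j)) \;\lesssim\; \bigl\| (T - T_h)\big|_{R(\lambda_j)} \bigr\|_{L^2 \to L^2}.
\]
For $u \in R(\lambda_j)$, Lemma \ref{lem3.3} bounds $\|T u - T_h u\|_0$ in terms of the $\mathbb{V}$-error and polynomial approximation errors; the $\mathbb{V}$-error is $O(h^{\min(l,s)})$ by Lemma \ref{lem3.4}, and the projection/interpolation terms $\|w^f - w_\pi^f\|_{1,h}$, $\|u^f - u_\pi^f\|_{1,h}$ are of the same order by \eqref{a3.13}, while the residual $L^2$ terms sit at order $h^{1+\min(l,s)}$. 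The extra factor $h^r$ in front of the bracket in \eqref{a3.16} then delivers the claimed $h^{r+\min(l,s)}$ rate.

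For \eqref{a3.23}, I would use the Osborn-type eigenvalue formula in the nonsymmetric setting, which controls $|\lambda_j - \widehat{\lambda}_{j,h}|$ by the product of a primal factor $\|(\mathcal{A}-\mathcal{A}_h)|_{M(\lambda_j)}\|$ and a dual factor $\|(\mathcal{A}^*-\mathcal{A}_h^*)|_{M^*(\lambda_j^*)}\|$ evaluated in the appropriate norm, together with consistency contributions that reflect $a_h \neq a$ and $b_h \neq b$. Each factor is $O(h^{\min(l,s)})$ by Lemma \ref{lem3.4} applied respectively to primal and adjoint eigenfunctions (both in $H^{1+s} \times H^{1+s}$), so their product already gives the target $h^{2\min(l,s)}$. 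The consistency terms $a_h(\mathbf{U}_j, \mathbf{U}_k^*) - a(\mathbf{U}_j, \mathbf{U}_k^*)$ and $b_h(\mathbf{U}_j, \mathbf{U}_k^*) - b(\mathbf{U}_j, \mathbf{U}_k^*)$ can be estimated by the argument in Lemma \ref{lem3.2} and in the treatment of the term $b({\bf F},{\boldsymbol \Phi}_I)-b_h({\bf F},{\boldsymbol \Phi}_I)$ inside Lemma \ref{lem3.3}, which, thanks to orthogonality of the $L^2$-projection against $\mathcal{P}_{l-2}$ and the smoothness of the eigenfunctions, produce at least $h^{2\min(l,s)}$.

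The principal obstacle is step three: verifying that the \emph{consistency} terms produced by the nonconforming discretization of the bilinear forms do not spoil the doubled rate. This is where the regularity $M(\lambda_j), M^*(\lambda_j^*) \subset H^{1+s} \times H^{1+s}$ must be used on \emph{both} arguments of $a_h - a$ and $b_h - b$, rather than only once as in the $\mathbb{V}$-estimate of Lemma \ref{lem3.4}; otherwise one would only recover $h^{\min(l,s)}$. The remaining pieces are then routine applications of the spectral approximation theorems in \cite{Ba1991}.
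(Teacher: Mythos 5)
Your proposal follows essentially the same route as the paper: Babu\v{s}ka--Osborn (Theorems 7.1 and 7.2 of \cite{Ba1991}) combined with Lemmas \ref{lem3.3} and \ref{lem3.4} for \eqref{a3.21}--\eqref{a3.22}, and for \eqref{a3.23} the same splitting of $b((0,(T-T_h)\xi_i),(0,\xi_i^*))$ into a product term $a((\mathcal{A}-\mathcal{A}_h)\mathbf{F},(\mathcal{A}^*-\mathcal{A}_h^*)\mathbf{F}^*)$ plus consistency terms handled by the arguments of \eqref{a3.18} and \eqref{a3.19}. The only minor difference is that the paper places the consistency terms at the discrete quantities $\mathcal{A}_h\mathbf{F}$, $\mathcal{A}_h^*\mathbf{F}^*$ rather than at the exact eigenfunctions, which is what makes those earlier arguments directly applicable, but this is a detail of the same argument.
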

\begin{proof}
Since $\| \mathcal{A}_h - \mathcal{A} \|_{\mathds{V} \rightarrow \mathds{V}} \to 0$  and $\| T_h - T \|_{L^2(\Omega) \rightarrow L^2(\Omega)} \to 0$ as $h\to 0$, from Theorems 7.1 in \cite{Ba1991} we have
\begin{align}\label{a3.24}
\widehat{\delta}_1(M(\lambda_j),M_h(\lambda_j)) & \lesssim \|(\mathcal{A} - \mathcal{A}_h )\mid_{M(\lambda_{j})}\|_{\mathds{V}},\\
\widehat{\delta}_0(R(\lambda_j),R_h(\lambda_j))& \lesssim \|(T-T_{h})\mid_{R(\lambda_{j})}\|_{0}. \label{a3.25}
\end{align}
From Lemma \ref{lem3.4}, we obtain
\begin{eqnarray}
\| \mathcal{A} {\bf U} - \mathcal{A}_h {\bf U} \|_{\mathds{V}} \lesssim h^s, \quad \forall {\bf U}\in M(\lambda_{j}),\label{a3.26}
\end{eqnarray}
and from Lemma \ref{lem3.3}, we obtain
\begin{eqnarray}
\| T u - T_h u \|_{0}\lesssim h^{r+ s},\quad \forall u\in R(\lambda_{j}) . \label{a3.27}
\end{eqnarray}
Then, from (\ref{a3.24}) and (\ref{a3.26}) we get (\ref{a3.21}), and from (\ref{a3.25}) and (\ref{a3.27}) we get (\ref{a3.22}).\\
\indent Let $\xi_j,\cdots,\xi_{j+q-1}$ be a basis in $R(\lambda_j)$ and $\xi_j^*,\cdots,\xi_{j+q-1}^*$ be the
dual basis in $R^*(\lambda_j)$. Since $\| T_h - T \|_{L^2(\Omega) \rightarrow L^2(\Omega)} \to 0$, then from Theorems 7.2 in \cite{Ba1991} we have
\begin{align}
&\mid\lambda_j -\widehat{\lambda}_{j,h}\mid \nonumber \\
&\lesssim \frac{1}{q} \sum_{i=j}^{j+q-1}\mid b((0,(T-T_h)\xi_i),(0,\xi^*_i))\mid+ \| (T_h - T)\mid_{R(\lambda_j)} \|_{0} \| (T_h^* - T^*)\mid_{R^*(\lambda_j)} \|_{0}. \label{a3.28}
\end{align}
In \eqref{a3.28}, we recognize that the second term on the right-hand side is of higher order compared to the first term; therefore, we only need to estimate the first term on the right-hand side. For $\forall f \in R(\lambda_j), f^* \in R^*(\lambda_j)$, we have
\begin{align*}
&b((0,(T-T_h)f),(0,f^*)) = b((0,f),(0,(T^*-T_h^*)f^*))  \\
&=b({\bf F},(\mathcal{A}^*-\mathcal{A}_h^*){\bf F^*})=a(\mathcal{A}{\bf F},(\mathcal{A}^*-\mathcal{A}_h^*){\bf F^*})\\
&= a((\mathcal{A}-\mathcal{A}_h){\bf F},(\mathcal{A}^*-\mathcal{A}_h^*){\bf F^*})+\{b(\mathcal{A}_h{\bf F},{\bf F^*}) -b_h(\mathcal{A}_h{\bf F},{\bf P}_h{\bf F}^*  )\}\\
&\quad+ \{ a_h(\mathcal{A}_h{\bf F},\mathcal{A}_h^*{\bf F^*})- a(\mathcal{A}_h{\bf F},\mathcal{A}_h^*{\bf F^*})\}\\
&:= I + I\!I + I\!I\!I.
\end{align*}
\noindent From the continuity of $a(\cdot,\cdot)$ and the error estimate in the sense of $\mathds{V}$ norm for the source problem
we know that $I  \lesssim h^{2 s}$; using the argument for \eqref{a3.18} we have $I\!I \lesssim h^{2(r+s)}$; and using the argument for \eqref{a3.19} we also get $ I\!I\!I  \lesssim h^{2 s}$. So we arrive at $I + I\!I + I\!I\!I \lesssim h^{2s}$, which together with \eqref{a3.28} yields (\ref{a3.23}). \hfill $\Box$
\end{proof}
\indent In addition, assuming that the ascent of the eigenvalue $\lambda_j$ is 1, we also have the following estimates for the eigenfunction ${\mathbf U}_j=(w_j,u_j)$.
\begin{theorem} 
Suppose $n$ is a piecewise smooth function, and let $\lambda_j$ be the $j$-th eigenvalue of \eqref{a2.3} with the ascent of 1. Additionally, let $(\lambda_{j,h}, {\mathbf U}_{j,h})$ be the $j$-th eigenpair of \eqref{a3.3}. Suppose that  $M(\lambda_j) \subset H^{1+s}(\Omega) \times H^{1+s}(\Omega)~(r\leq s \leq l)$, then there is ${\mathbf U}_j\in M(\lambda_j)$  such that for sufficiently small $h$, the following estimates hold: 
\begin{align}
\| {\mathbf U}_j - {\mathbf U}_{j,h} \|_{\mathbf{\mathbb{V}}}  &\lesssim  h^s , \label{a3.29} \\
 \| u_j - u_{j,h} \|_{0} &\lesssim  h^{r+s} , \label{a3.30}\\
 \| u_j - u_{j,h} \|_{0} \lesssim  h^r \{\|{\mathbf U}_j-{\mathbf U}_{j,h}\|_{\mathbb{V}} &+ \| w_j-w_{j,\pi} \|_{1,h} + \| u_j-u_{j,\pi} \|_{1,h}  \} +  h^{1+r} \nonumber \\
\{ \|u_j-u_{j,I}\|_0+ \|u_j-\Pi_l^0 u_j\|_0 + &\|w_j-\Pi_{l}^{0}w_j\|_{0} + \|n w_j-\Pi_{l}^{0}(nw_j)\|_{0}  \} , \label{a3.31}
\end{align}
where $w_{j,\pi}$ and $u_{j,\pi}$ are the approximations of $w_j$ and $u_j$ that satisfy \eqref{a3.13}, respectively, and $u_{j,I}$ is the interpolation of $u_j$ satisfying \eqref{a3.14}.
\end{theorem}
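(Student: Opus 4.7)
My plan is to combine the spectral approximation framework already set up in Theorem \ref{th3.1} with the ascent-one hypothesis on $\lambda_j$, and, for the sharper $L^2$ estimate \eqref{a3.31}, to rerun the duality argument of Lemma \ref{lem3.3} directly on the eigenvalue problem.

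\textbf{Estimates \eqref{a3.29} and \eqref{a3.30}.} Since the ascent of $\lambda_j$ equals one and both $\|{\bf \mathcal{A}}_h-{\bf \mathcal{A}}\|_{{\bf \mathds{V}}\to{\bf \mathds{V}}}\to 0$ and $\|T_h-T\|_{L^2\to L^2}\to 0$, the classical result on semisimple eigenvalues (Theorem 7.3 in \cite{Ba1991}) ensures that for $h$ small enough and any ${\bf U}_{j,h}\in M_h(\lambda_j)$ there exists ${\bf U}_j\in M(\lambda_j)$ such that
\[ \|{\bf U}_j-{\bf U}_{j,h}\|_{{\bf \mathds{V}}} \lesssim \widehat{\delta}_1(M(\lambda_j),M_h(\lambda_j)). \]
Combining this with \eqref{a3.21} gives \eqref{a3.29}. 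Because ${\bf U}_j=(w_j,u_j)\in M(\lambda_j)$ forces $u_j\in R(\lambda_j)$, the same argument applied in $L^2$ together with \eqref{a3.22} yields \eqref{a3.30}.

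\textbf{Estimate \eqref{a3.31}.} I reproduce the duality argument used in the proof of Lemma \ref{lem3.3}. Consider the dual problem: find $\boldsymbol{\Phi}=(\phi,z)\in{\bf \mathds{V}}$ satisfying
\[ a(\boldsymbol{\Psi},\boldsymbol{\Phi}) = B((0,u_j-u_{j,h}),\boldsymbol{\Psi}),\quad \forall\,\boldsymbol{\Psi}\in{\bf \mathds{V}}, \]
so that $\|\phi\|_{1+r}+\|z\|_{1+r}\lesssim \|u_j-u_{j,h}\|_0$. Testing with $\boldsymbol{\Psi}={\bf U}_j-{\bf U}_{j,h}$ and inserting the interpolant $\boldsymbol{\Phi}_I\in{\bf \mathds{V}}_h$ of $\boldsymbol{\Phi}$ gives
\[ \|u_j-u_{j,h}\|_0^2 = a({\bf U}_j-{\bf U}_{j,h},\boldsymbol{\Phi}-\boldsymbol{\Phi}_I) + a({\bf U}_j-{\bf U}_{j,h},\boldsymbol{\Phi}_I). \]
Continuity of $a(\cdot,\cdot)$ and \eqref{a3.14} control the first summand by $h^r\|{\bf U}_j-{\bf U}_{j,h}\|_{{\bf \mathds{V}}}\|u_j-u_{j,h}\|_0$, which after dividing out $\|u_j-u_{j,h}\|_0$ reproduces the leading term of \eqref{a3.31}. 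For the second summand I invoke the continuous and discrete eigenvalue equations to write
\[ a({\bf U}_j-{\bf U}_{j,h},\boldsymbol{\Phi}_I) = \bigl[(\beta-\lambda_j)b({\bf U}_j,\boldsymbol{\Phi}_I)-(\beta-\lambda_{j,h})b_h({\bf U}_{j,h},\boldsymbol{\Phi}_I)\bigr] + \bigl[a_h({\bf U}_{j,h},\boldsymbol{\Phi}_I)-a({\bf U}_{j,h},\boldsymbol{\Phi}_I)\bigr]. \]
The first bracket is treated exactly as in the derivation of \eqref{a3.18} and yields the $h^{1+r}\|u_j-\Pi_l^0u_j\|_0$ contribution, with the eigenvalue shift $|\lambda_j-\lambda_{j,h}|$ absorbed as a higher-order remainder using \eqref{a3.23}. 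The second bracket is handled as in the derivation of \eqref{a3.19}: inserting the polynomial approximants $({\bf U}_{j,\pi},\boldsymbol{\Phi}_\pi)$, using \eqref{a3.7}-\eqref{a3.8} and \eqref{a3.13}-\eqref{a3.14}, and invoking Lemma \ref{lem3.2}, we obtain the $\|w_j-w_{j,\pi}\|_{1,h}$, $\|u_j-u_{j,\pi}\|_{1,h}$ and $h^{1+r}\|S_hu_j-\Pi_l^0(S_hu_j)\|_0$ contributions.

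\textbf{Main obstacle.} The most delicate point is the bookkeeping in the second bracket. Lemma \ref{lem3.2} naturally produces the quantity $\|w_{j,h}-\Pi_l^0 w_{j,h}\|_0$, whereas \eqref{a3.31} is phrased in terms of $\|S_hu_j-\Pi_l^0(S_hu_j)\|_0$; the two are identified via the triangle inequality together with the estimate of $\|w_{j,h}-(\beta-\lambda_j)S_hu_j\|_0$ derived from \eqref{a3.11}, \eqref{a3.23} and \eqref{a3.30}, all of which contribute only higher-order terms that are absorbed into the $h^r\|{\bf U}_j-{\bf U}_{j,h}\|_{{\bf \mathds{V}}}$ part of \eqref{a3.31}. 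The quadratic remainder $\|u_j-u_{j,h}\|_0^2$ that appears along the way is moved to the left-hand side for $h$ sufficiently small, closing the argument.
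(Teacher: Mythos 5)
Your treatment of \eqref{a3.29}--\eqref{a3.30} is essentially the paper's: both reduce the eigenfunction errors to the spectral-approximation estimates of Babu\v{s}ka--Osborn and then invoke the source-problem bounds already proved (the paper uses Theorem 7.4 of \cite{Ba1991} to get $\|{\bf U}_j-{\bf U}_{j,h}\|_{\bf \mathds{V}}\lesssim\|({\bf \mathcal{A}}_h-{\bf \mathcal{A}}){\bf U}_j\|_{\bf \mathds{V}}$ and $\|u_j-u_{j,h}\|_0\lesssim\|(T_h-T)u_j\|_0$, while you go through the gaps \eqref{a3.21}--\eqref{a3.22}; modulo normalization and the need for a single ${\bf U}_j$ serving both estimates, this is the same argument).

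For \eqref{a3.31}, however, your plan has a genuine gap. Re-running the Aubin--Nitsche duality directly on the eigenvalue equations forces you to compare $(\beta-\lambda_j)b({\bf U}_j,\boldsymbol{\Phi}_I)$ with $(\beta-\lambda_{j,h})b_h({\bf U}_{j,h},\boldsymbol{\Phi}_I)$, and after the consistency and eigenvalue-shift pieces are removed there remains the term $(\beta-\lambda_j)\,k^2\sum_E b_h^E(u_j-u_{j,h},z_I)$. Its size is $C\,\|u_j-u_{j,h}\|_0\,\|z_I\|_0\lesssim C\,\|u_j-u_{j,h}\|_0^2$, where $C$ involves $|\beta-\lambda_j|k^2$, the stability constants in \eqref{a3.8}, and the regularity constant of the dual problem; this constant is $h$-independent and in no way smaller than one, so the "quadratic remainder" cannot be moved to the left-hand side "for $h$ sufficiently small" --- absorption would require the coefficient, not $h$, to be small. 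This self-referential term is intrinsic to applying duality to an eigenpair (the eigenfunction is not the solution of a source problem with data independent of itself), and removing it requires the spectral structure. The paper sidesteps the issue entirely: it first uses \eqref{a3.33} to bound $\|u_j-u_{j,h}\|_0\lesssim\|(T_h-T)u_j\|_0\le\|({\bf \mathcal{A}}_h-{\bf \mathcal{A}}){\bf U}_j\|_{\bf \mathds{W}}$, then applies the already-proved source estimate \eqref{a3.16} with the \emph{fixed} datum ${\bf F}={\bf U}_j$ (so that ${\bf \mathcal{A}}{\bf U}_j=\frac{1}{\beta-\lambda_j}{\bf U}_j$, the $\pi$-approximants become $w_{j,\pi},u_{j,\pi}$ and $w_h^f=S_hu_j$, which is why $\|S_hu_j-\Pi_l^0(S_hu_j)\|_0$ appears naturally and your extra identification of $\|w_{j,h}-\Pi_l^0w_{j,h}\|_0$ is unnecessary), and finally proves \eqref{a3.35}, $\|({\bf \mathcal{A}}_h-{\bf \mathcal{A}}){\bf U}_j\|_{\bf \mathds{V}}\lesssim\|{\bf U}_j-{\bf U}_{j,h}\|_{\bf \mathds{V}}$, using the eigenvalue relations and the fact that $|\lambda_j-\lambda_{j,h}|$ is of higher order. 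You should restructure your proof of \eqref{a3.31} along these lines rather than redoing the duality on the eigenpair.
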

\begin{proof}
Since $\| \mathbf{\mathcal{A}}_h - \mathbf{\mathcal{A}} \|_{\mathbf{\mathbb{V}} \to \mathbf{\mathbb{V}}} \to 0$ and $\| T_h - T \|_{L^2(\Omega) \to L^2(\Omega)} \to 0$, by Theorem 7.4 in \cite{Ba1991}, we know that there exists ${\mathbf U}_j \in M(\lambda_j)$, such that
\begin{eqnarray}
\| {\mathbf U}_j - {\mathbf U}_{j,h} \|_{\mathbf{\mathbb{V}}} &\lesssim& \| (\mathbf{\mathcal{A}}_h-\mathbf{\mathcal{A}}){{\mathbf U}_j} \|_{\mathbf{\mathbb{V}}}, \label{a3.32}\\
\|u_j-u_{j,h}\|_{0} &\lesssim& \| (T_h -T)u_j \|_{0}.\label{a3.33}
\end{eqnarray}
By combining \eqref{a3.32} and \eqref{a3.26}, we obtain \eqref{a3.29}. Similarly, combining \eqref{a3.33} and \eqref{a3.27}, we can derive \eqref{a3.30}. \\
\indent From the definition of $\mathbf{\mathcal{A}}$ and \eqref{a2.3} we get
$\mathbf{\mathcal{A}} {\mathbf U}_j=\frac{1}{\beta-\lambda_j} {\mathbf U}_j$. Similarly, there holds $\mathbf{\mathcal{A}}_h {\mathbf U}_{j,h}=\frac{1}{\beta-\lambda_{j,h}} {\mathbf U}_{j,h}$.
By using \eqref{a3.16}, \eqref{a3.33} and $\| (T_h -T)u_j \|_{0} \lesssim \| (\mathbf{\mathcal{A}}_h-\mathbf{\mathcal{A}}){{\mathbf U}_j} \|_{\mathbf{\mathbb{W}}}$, we get
\begin{align}
&\| u_j - u_{j,h} \|_{0} \lesssim  h^r \left\{ \|(\mathbf{\mathcal{A}}_h-\mathbf{\mathcal{A}}){{\mathbf U}_j}\|_{\mathbf{\mathbb{V}}} + \| w_j-w_{j,\pi} \|_{1,h} + \| u_j-u_{j,\pi} \|_{1,h} \right\} +  h^{1+r}\nonumber \\
&\quad \left\{\|u_j-u_{j,I}\|_0+ \|u_j-\Pi_{l}^{0}u_j\|_{0} + \|w_j-\Pi_{l}^{0}w_j\|_{0} + \|n w_j-\Pi_{l}^{0}(nw_j)\|_{0}  \right\}, \label{a3.34}
\end{align}
where $w_{j,\pi}$ and $u_{j,\pi}$ are the approximations of $w_j$ and $u_j$ that satisfy \eqref{a3.13}, respectively, and $u_{j,I}$ is the interpolation of $u_j$ satisfying \eqref{a3.14}.
And
\begin{eqnarray}
\|(\mathbf{\mathcal{A}}_h-\mathbf{\mathcal{A}}){\mathbf U}_j\|_{\mathbf{\mathbb{V}}}
&=&\|\mathbf{\mathcal{A}}_h {\mathbf U}_j -\mathbf{\mathcal{A}}_h {\mathbf U}_{j,h}+ \mathbf{\mathcal{A}}_h {\mathbf U}_{j,h}-\mathbf{\mathcal{A}} {\mathbf U}_j\|_{\mathbf{\mathbb{V}}} \nonumber\\
&\leq& \|\mathbf{\mathcal{A}}_h {\mathbf U}_j -\mathbf{\mathcal{A}}_h {\mathbf U}_{j,h}\|_{\mathbf{\mathbb{V}}}
+ \|\mathbf{\mathcal{A}}_h {\mathbf U}_{j,h}-\mathbf{\mathcal{A}} {\mathbf U}_j\|_{\mathbf{\mathbb{V}}} \nonumber\\
&\leq & \|\mathbf{\mathcal{A}}_h\|_{\mathbf{\mathbb{V}}} \|{\mathbf U}_j - {\mathbf U}_{j,h}\|_{\mathbf{\mathbb{V}}} \nonumber\\
&\quad +& \frac{1}{(\beta-\lambda_{j,h})(\beta-\lambda_j)}\|(\beta-\lambda_j){\mathbf U}_{j,h}-(\beta-\lambda_{j,h}){\mathbf U}_j\|_{\mathbf{\mathbb{V}}} \nonumber\\
&\lesssim &  \|{\mathbf U}_j - {\mathbf U}_{j,h}\|_{\mathbf{\mathbb{V}}} + \mid \lambda_{j,h}-\lambda_j\mid \nonumber\\
&\lesssim &  \|{\mathbf U}_j - {\mathbf U}_{j,h}\|_{\mathbf{\mathbb{V}}}, \label{a3.35}
\end{eqnarray}
where the last "$\lesssim$" in the above holds because, from Theorem 7.3 in \cite{Ba1991}, the proof of \eqref{a3.28} and \eqref{a3.29}, we know that $\mid\lambda_{j,h}-\lambda_j\mid$ is a small quantity compared to $\|{\mathbf U}_j - {\mathbf U}_{j,h}\|_{\mathbf{\mathbb{V}}}$. By combining \eqref{a3.34} and \eqref{a3.35}, we obtain \eqref{a3.31}.  
\hfill $\Box$
\end{proof}

\section{Numerical Experiments}
\indent To validate the theoretical analysis, in this section we will report some numerical examples. The computations are performed using MATLAB 
on a ThinkBook 14p Gen 2 PC with 16GB of RAM. In the numerical experiments, unless otherwise specified, the stabilization parameters $\sigma^E$ and $\tau^E$ are both set to $10^0$. We consider the test domains as $\Omega_S = [0, 1]^2$, $\Omega_L = [0, 1]^2 \setminus (0.5, 1) \times (0, 0.5)$, and the unit disk $\Omega_C$.
Referring to \cite{Liu2023,CoM2017}, the parameters are set to the following three cases:
\begin{description}
\setlength{\itemsep}{0pt}
\setlength{\parsep}{0pt}
\setlength{\parskip}{0pt}
\item[\textnormal{Case 1}:] $n=4,~\gamma=2,~k=1$ ;
\item[\textnormal{Case 2}:] $n=2+4\mathrm{i},~\gamma=2,~k=1$.
\item[\textnormal{Case 3}:] $n=4+2(x_1^2+x_2^2)^{1/2},~\gamma=2,~k=1$.
\end{description}
We use uniform refinement during the mesh refinement process, where for a geometric element $E$ with $E_N$ edges, we connect the midpoint of each edge of $E$ to the centroid of $E$, dividing $E$ into a mesh of no more than $E_N$ geometric elements (see, e.g., \cite{MoraR2020}).\\
\indent 
In order to rewrite the discrete variational form \eqref{a3.3} as a generalized matrix eigenvalue problem, we first let
a basis $\{\xi_i\}_{i=1}^{N_0+N_b}$ of the virtual element space $\mathcal{W}^l_h$, where $\{\xi_i\}_{i=N_0+1}^{N_0+N_b}$ correspond to the global degrees of freedom associated with $(Dof_1)$ and $(Dof_2)$ on $\partial \Omega$, and $\{\xi_i\}_{i=1}^{N_0}$ are the remaining internal degrees of freedom.\\
\indent Next, we define three $(N_0+N_b)\times (N_0+N_b)$ matrices $A_1$, $M_1$, and $M_2$, whose entries in the $j$-th row and $i$-th column are given respectively by
$$
(A_1)_{ji}=\sum_{E \in \mathcal{T}_h} a_h^E(\xi_i, \xi_j),~
(M_1)_{ji}=\sum_{E \in \mathcal{T}_h} k^2 b_h^E(n\xi_i, \xi_j),~
(M_2)_{ji}=\sum_{E \in \mathcal{T}_h} k^2 b_h^E(\xi_i, \xi_j).
$$
Let
$$
 w_h=\sum_{i=1}^{N_0+N_b} w_i \xi_i,\quad u_h=\sum_{i=1}^{N_0+N_b} u_i \xi_i,
$$
and set $w_i = u_i$ for $N_0+1 \leq i \leq N_0+N_b$ to ensure the condition $(w_h - u_h)\!\mid_e = 0$ for all $e \in \mathcal{E}_h \cap \partial\Omega$.
Then define
$$
X_h = \sum_{i=1}^{N_0} w_i \xi_i + \sum_{i=1}^{N_0} u_i \xi_i + \sum_{i=N_0+1}^{N_0+N_b} w_i \xi_i
$$
with ${\mathbf X}=(w_1,\cdots,w_{N_0},u_1,\cdots,u_{N_0},w_{N_0+1},\cdots,w_{N_0+N_b})^T$.
We introduce the matrices
$A =$
$$
\begin{bmatrix}
[A_1-M_1]_{N_0 \times N_0} & {\bf 0} & [A_1 - M_1]_{N_0 \times N_b} \\[4pt]
{\bf 0} & [\beta M_2-\frac{1}{\gamma}A_1]_{N_0 \times N_0} & [\beta M_2-\frac{1}{\gamma}A_1 ]_{N_0 \times N_b} \\[4pt]
[A_1 - M_1]_{N_b \times N_0} & [\beta M_2-\frac{1}{\gamma}A_1 ]_{N_b \times N_0} & [(1 - \frac{1}{\gamma})A_1 + (\beta M_2 - M_1)]_{N_b \times N_b}
\end{bmatrix},
$$
and
$$
M =
\begin{bmatrix}
{\bf 0} & {\bf 0} & {\bf 0} \\[4pt]
{\bf 0} & [M_2]_{N_0 \times N_0} & [M_2]_{N_0 \times N_b} \\[4pt]
{\bf 0} & [M_2]_{N_b \times N_0} & [M_2]_{N_b \times N_b}
\end{bmatrix}.
$$
Then the discrete formulation \eqref{a3.3} is transformed into the following generalized matrix eigenvalue problem:
$$
A {\mathbf X} = (\beta - \lambda_h) M {\mathbf X}.
$$
\noindent \textbf{Example 1:} For the test domain $\Omega_S$, we consider three different polygonal meshes, with the initial meshes shown in Fig.~\ref{fig:1}.

\begin{figure}[ht]
  \captionsetup[subfloat]{labelformat=empty} 
    \centering
    \subfloat[quadrilateral mesh $\mathcal{T}_h^1$]{\label{fig:1a}\includegraphics[width=.33\linewidth]{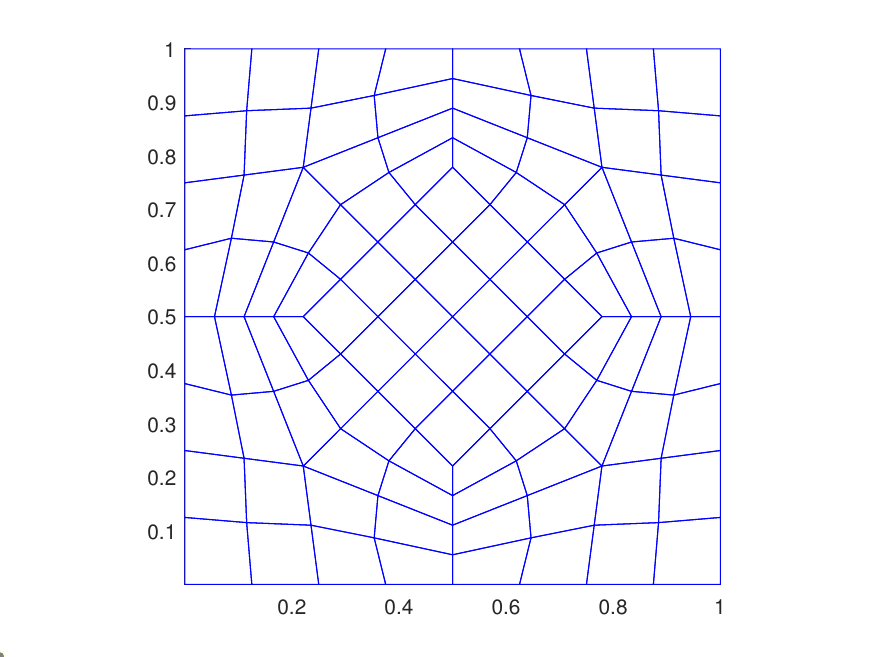}}
    \subfloat[Voronoi mesh $\mathcal{T}_h^2$]{\label{fig:1b}\includegraphics[width=.33\linewidth]{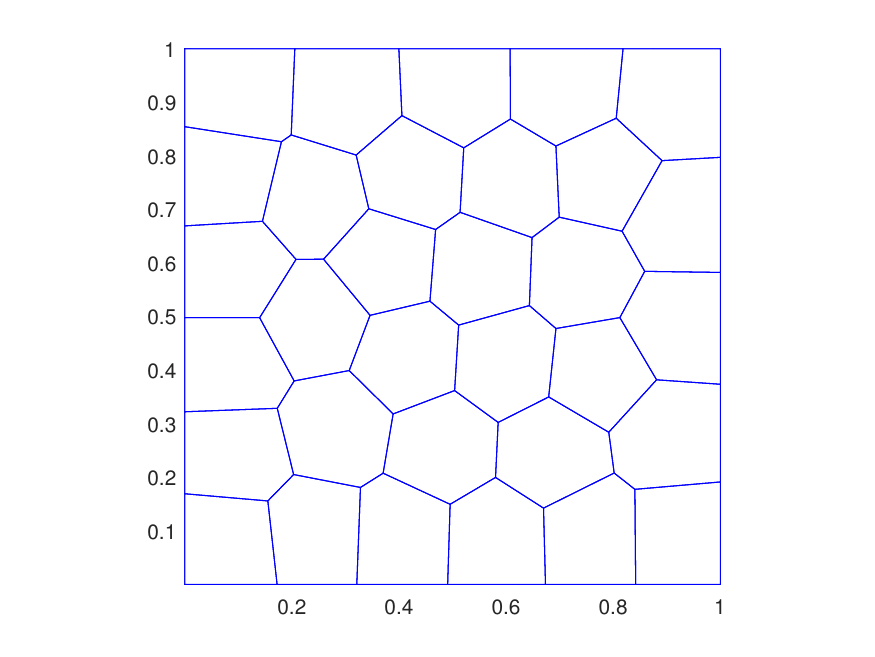}}
    \subfloat[distortion poly mesh $\mathcal{T}_h^3$]{\label{fig:1c}\includegraphics[width=.33\linewidth]{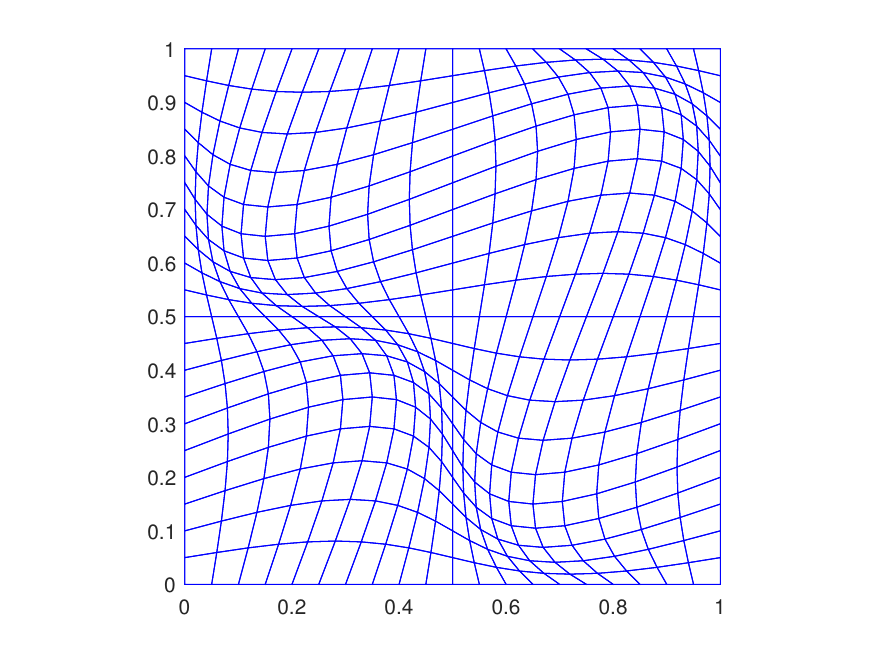}}
    \caption{Initial meshes for $\Omega_S$}
    \label{fig:1}
\end{figure}

\begin{figure}[ht]
    \centering
    \subfloat[]{\label{fig1a}\includegraphics[width=.33\linewidth]{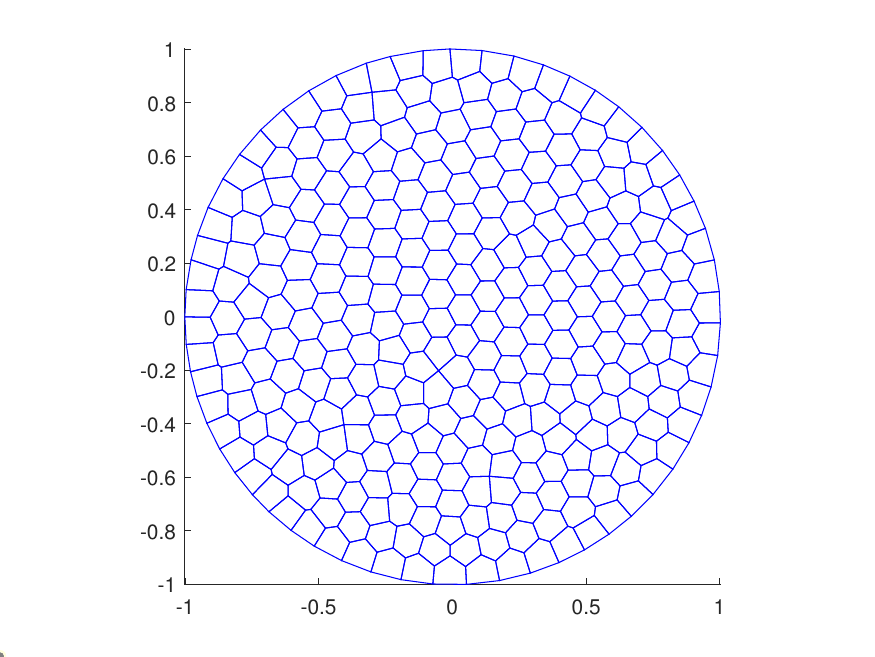}}
    \subfloat[]{\label{fig1a}\includegraphics[width=.33\linewidth]{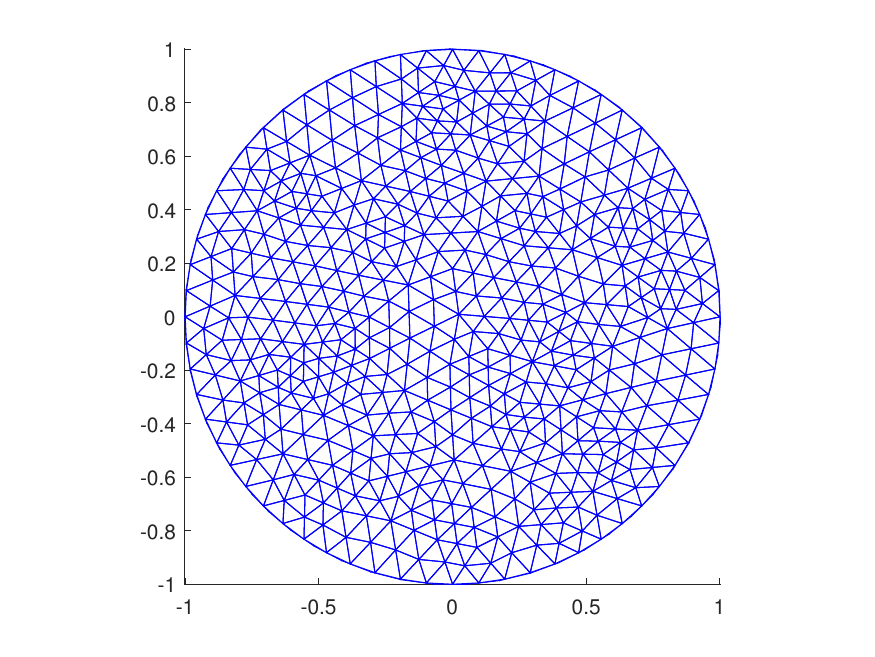}}
    \subfloat[]{\label{fig1b}\includegraphics[width=.33\linewidth]{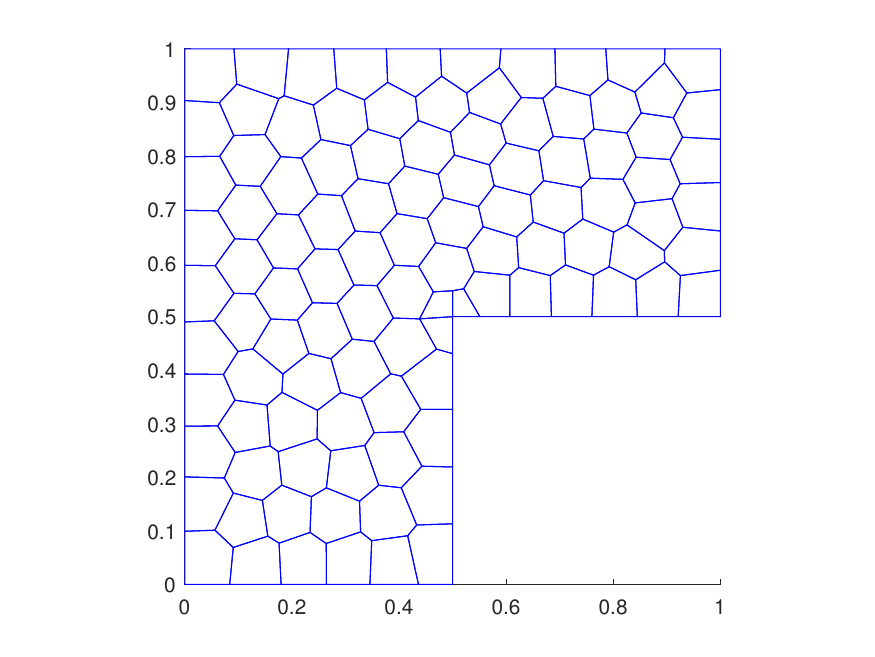}}
    \caption{Two different mesh partitions of $\Omega_C$: (a) and (b), and initial mesh for $\Omega_L$: (c)}
    \label{fig:2}
\end{figure}

\indent In Case 1, we compute the first four approximate eigenvalues of \eqref{a2.3} on $\Omega_S$ using the VEM with degrees $l=1$ and $l=2$. The corresponding error curves for these approximate eigenvalues are plotted in Figs.~\ref{fig:3} to~\ref{fig:5} (where the reference values are taken as the most accurate approximations that we can compute). From Figs.~\ref{fig:3} to~\ref{fig:5}, we observe that, with the exception of some fluctuations in the third approximate eigenvalue on the right side of Fig.~\ref{fig:4}, the error curves are generally parallel to the lines with slopes of $-1$ or $-2$, indicating that our method achieves the optimal convergence rate. Additionally, we can clearly see that the convergence results with the mesh partition $\mathcal{T}_h^3$ are better than those with $\mathcal{T}_h^1$ and $\mathcal{T}_h^2$.

\begin{figure}[ht]
\captionsetup[subfloat]{labelformat=empty} 
    \centering
    \subfloat[Study of convergence when $ l = 1 $]{\label{fig1a}\includegraphics[width=.45\linewidth]{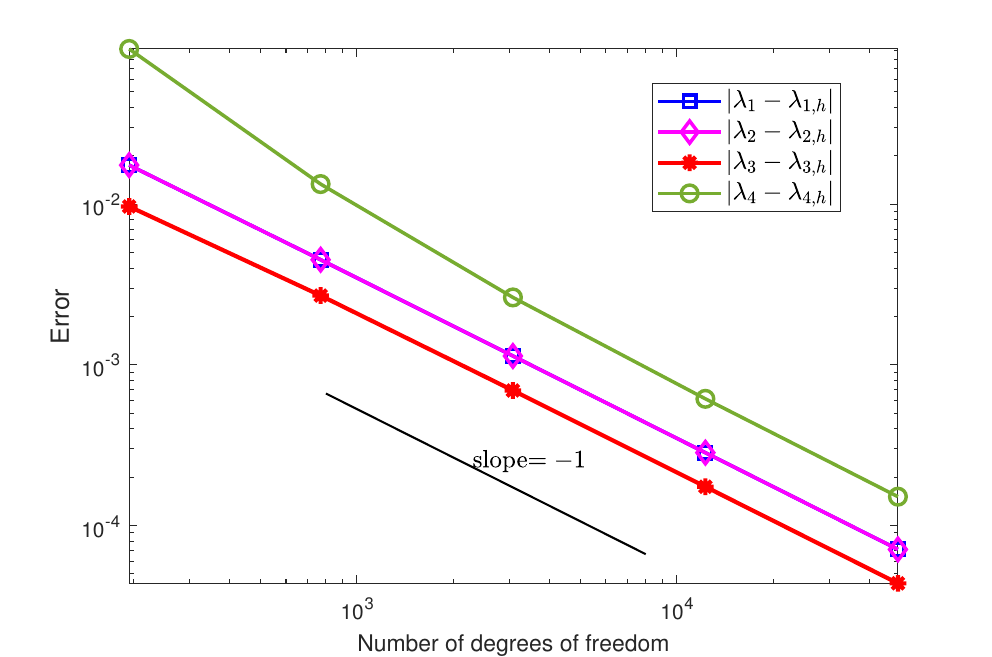}}
    \subfloat[Study of convergence when $ l = 2 $]{\label{fig1b}\includegraphics[width=.45\linewidth]{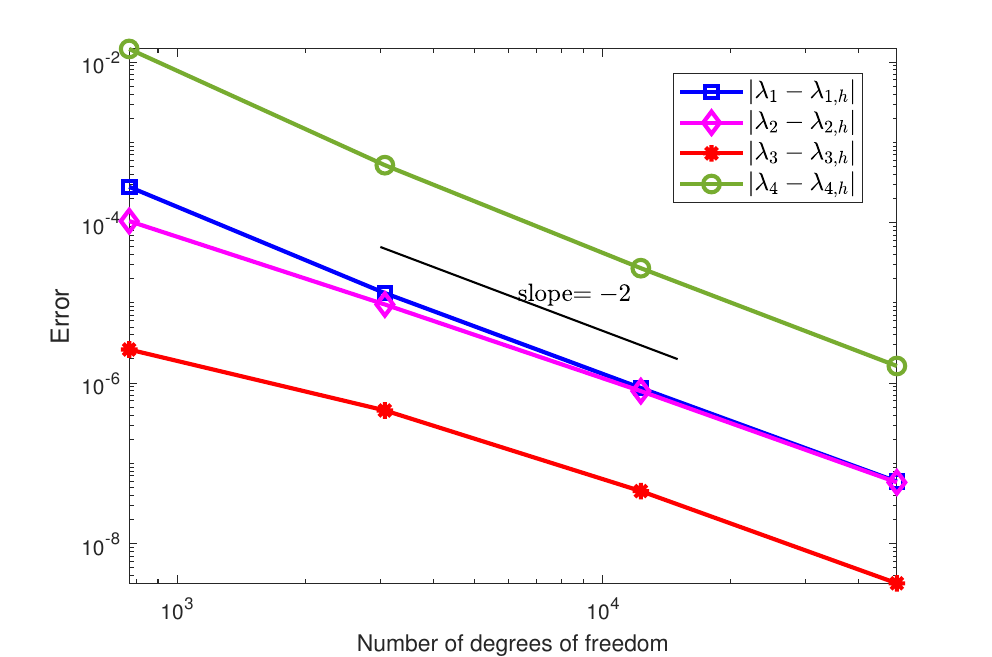}}
    \caption{Error curves of the first four approximate eigenvalues on $ \mathcal{T}_h^1 $ in Case 1}
    \label{fig:3}
\end{figure}

\begin{figure}[ht]
\captionsetup[subfloat]{labelformat=empty} 
    \centering
    \subfloat[Study of convergence when $ l = 1 $]{\label{fig1a}\includegraphics[width=.45\linewidth]{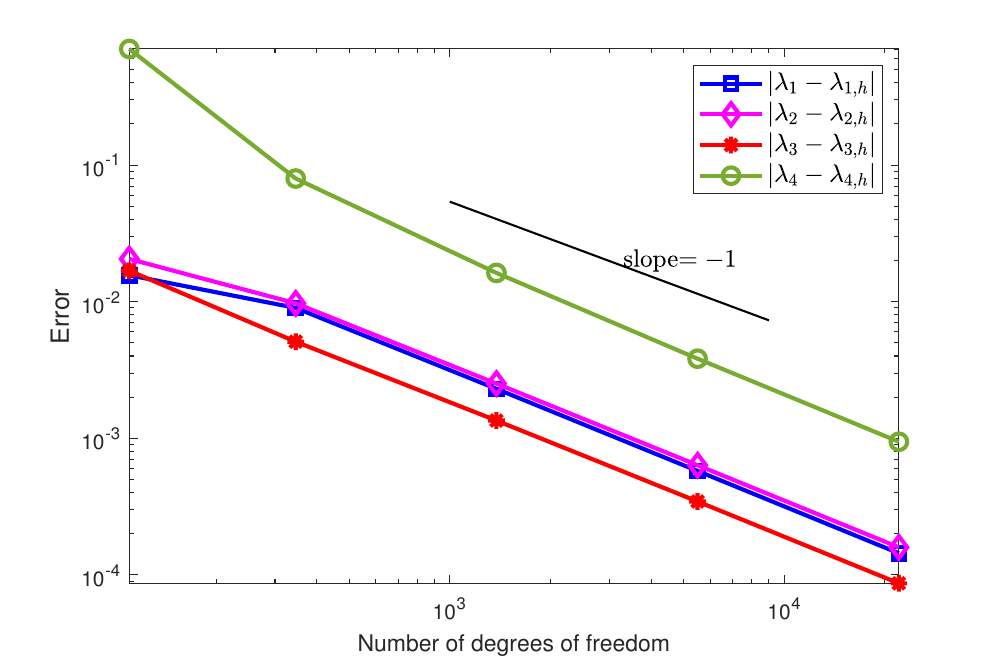}}
    \subfloat[Study of convergence when $ l = 2 $]{\label{fig1b}\includegraphics[width=.45\linewidth]{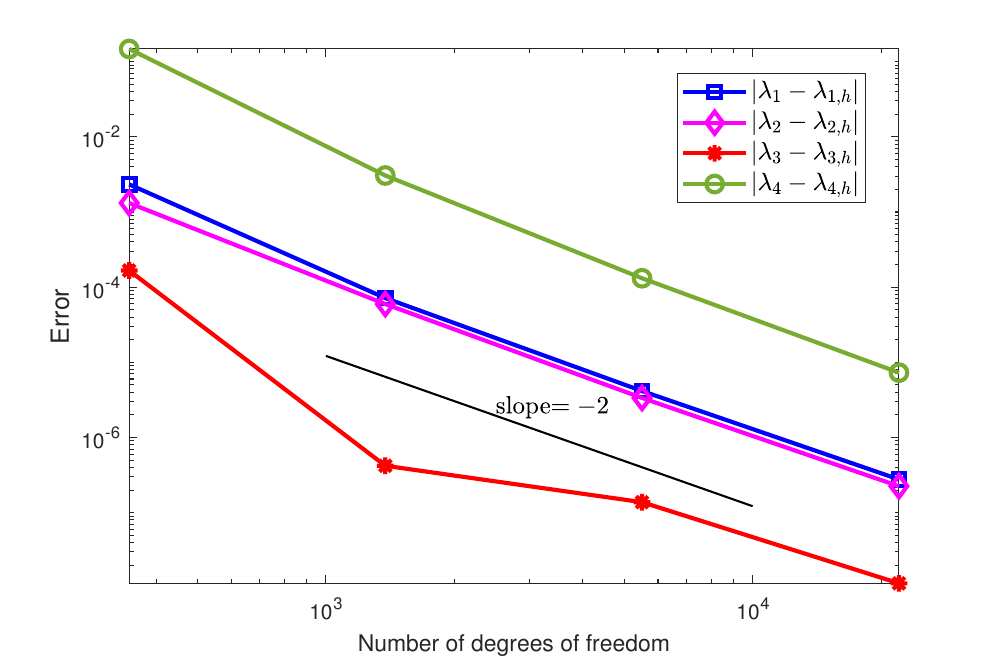}}
    \caption{Error curves of the first four approximate eigenvalues on $\mathcal{T}_h^2$ in Case 1}
    \label{fig:4}
\end{figure}

\begin{figure}[ht]
\captionsetup[subfloat]{labelformat=empty} 
    \centering
    \subfloat[Study of convergence when $ l = 1 $]{\label{fig1a}\includegraphics[width=.45\linewidth]{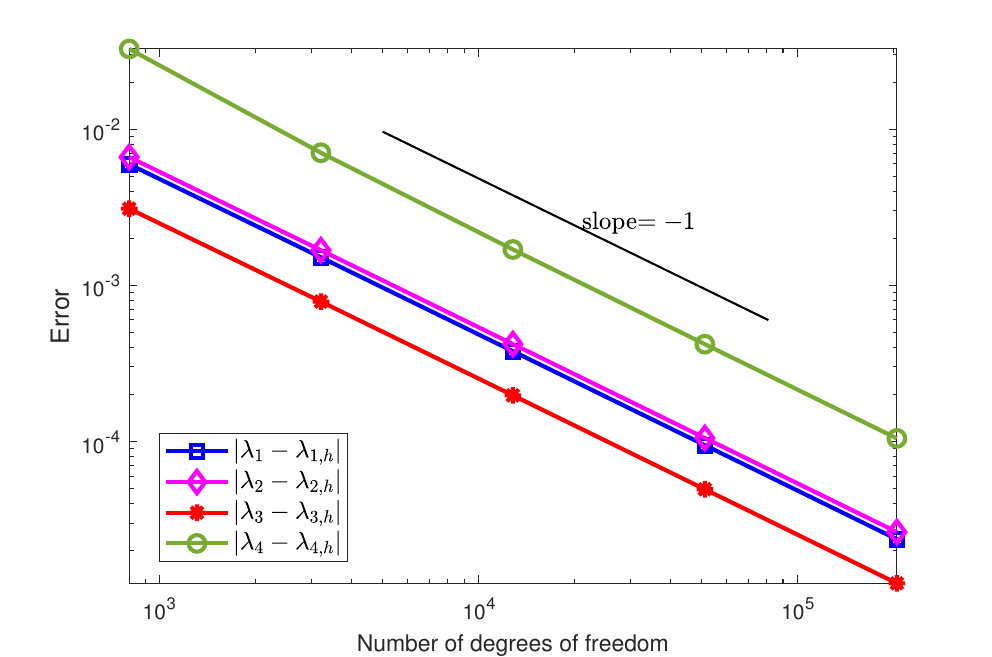}}
    \subfloat[Study of convergence when $ l = 2 $]{\label{fig1b}\includegraphics[width=.45\linewidth]{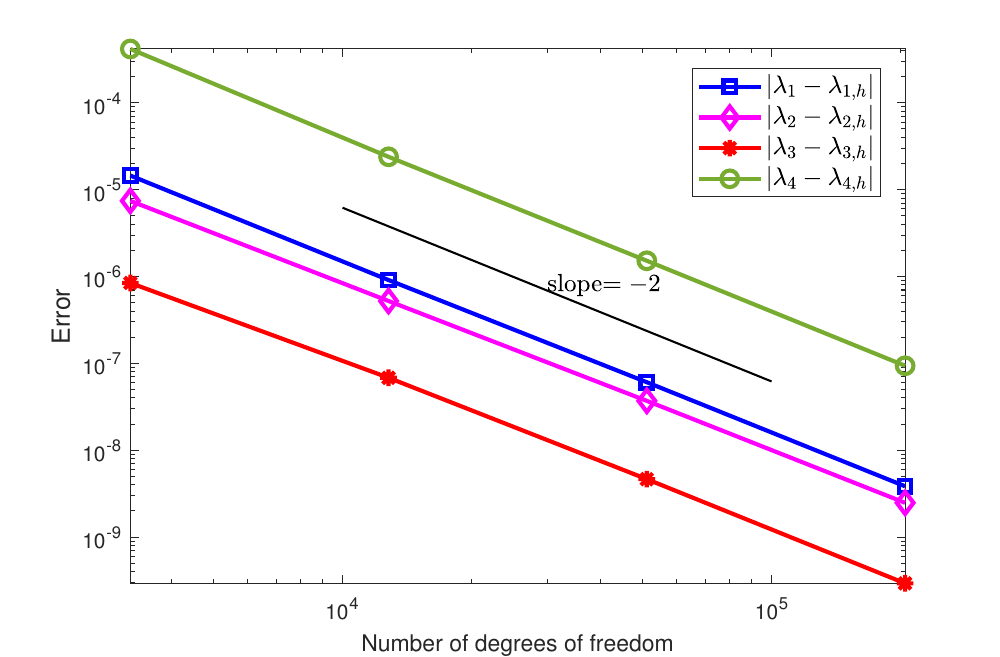}}
    \caption{Error curves of the first four approximate eigenvalues on $\mathcal{T}_h^3$ in Case 1}
    \label{fig:5}
\end{figure}

\noindent {\bf Example 2:}
In Case 1, we compute the first four approximate eigenvalues of \eqref{a2.3} on the domain $\Omega_L$ using the VEM with degrees $l=1$ and $l=2$. The initial mesh used is depicted in Fig.~\ref{fig:2}(c). The reference values are selected using the same method as in Example 1. 
Fig.~\ref{fig:6} shows the corresponding error curves. For $l = 1$, all eigenvalues essentially achieve the optimal convergence order. For $l = 2$, these error curves are not parallel to the line with slope $-2$, nor do they achieve parallelism with the line of slope $-1$, which is consistent with the theory, since the convergence order $s$ depends on both $l$ and $r$.
Subsequently, by setting the stabilization parameters $\sigma^E$ and $\tau^E$ to the respective values of $10^{-3}, 10^{-2}, 10^{-1}, 10^{0}, 10^{1}, 10^{2}$, we test their influence on the convergence of the first eigenvalue of \eqref{a3.3}. The convergence results are plotted in Figs.~\ref{fig:7} and~\ref{fig:8}. From these results, it is evident that the selection of the stabilization parameters $\sigma^E$ and $\tau^E$ is not trivial. To achieve good convergence, choosing either $\sigma^E=\tau^E=1$ or a larger value for $\sigma^E$ seems advisable. 
In particular, Fig.~\ref{fig:7} indicates that for $l=1$ and $\tau^E \in [10^{-3}, 10^{1}]$, the optimal choice appears to be $\sigma^E \in [10^{-1}, 10^{0}]$. Fig.~\ref{fig:8} shows that for $l=2$ and $\tau^E \in [10^{-3}, 10^{2}]$, the optimal choice is $\sigma^E = 10^{1}$.

\begin{figure}[ht]
\captionsetup[subfloat]{labelformat=empty} 
    \centering
    \subfloat[Study of convergence when $ l = 1 $]{\label{fig1a}\includegraphics[width=.45\linewidth]{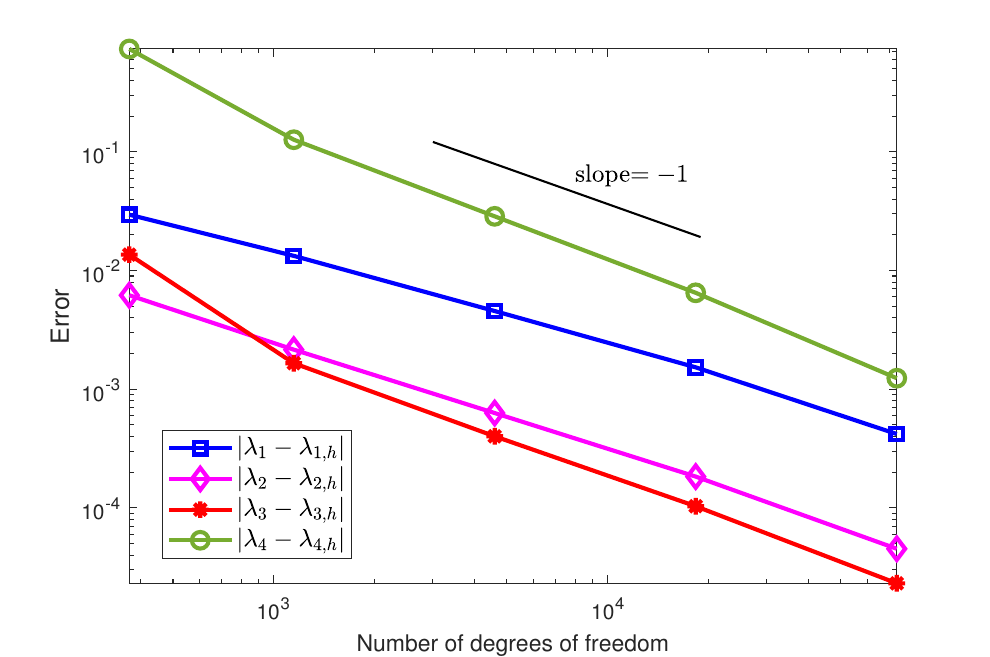}}
    \subfloat[Study of convergence when $ l = 2 $]{\label{fig1b}\includegraphics[width=.45\linewidth]{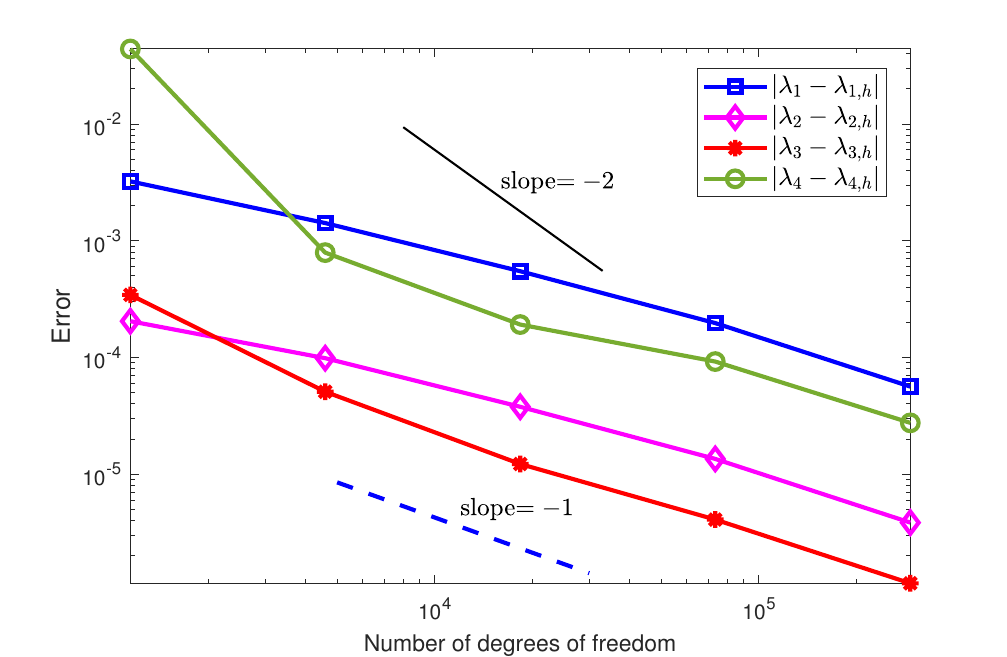}}
    \caption{Error curves of the first four approximate eigenvalues on $\Omega_L$ in Case 1}
    \label{fig:6}
\end{figure}

\begin{figure}[ht]
\flushleft
\includegraphics[width=1\textwidth,height=0.5\textwidth]{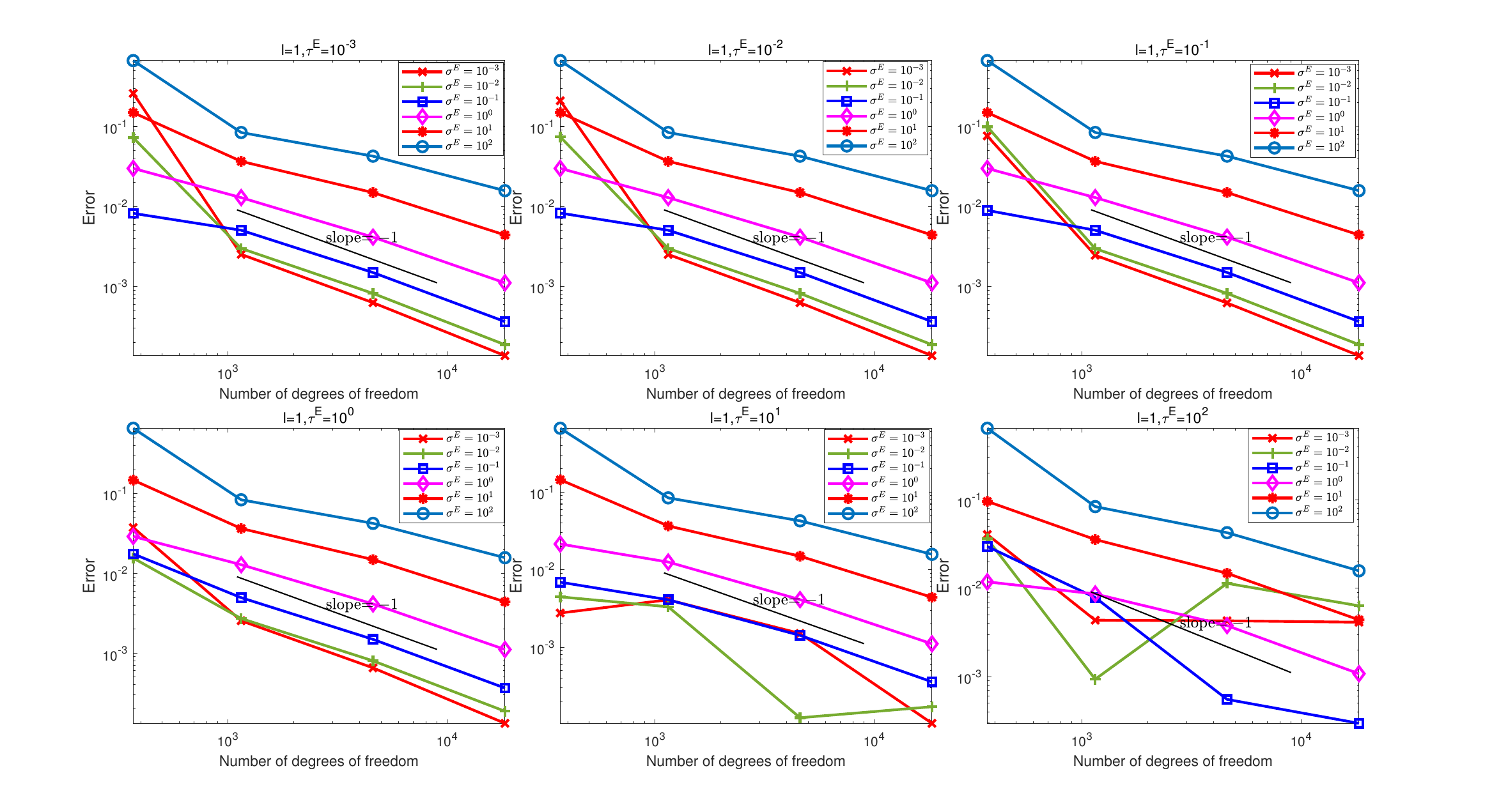}
\caption{The error curve of the first approximate eigenvalue obtained by using the VEM with degree $l=1$ on $\Omega_L$ with different parameter selections for $\sigma^E$ and $\tau^E$}
\label{fig:7}
\end{figure}

\begin{figure}[ht]
\centering
\includegraphics[width=1\textwidth,height=0.5\textwidth]{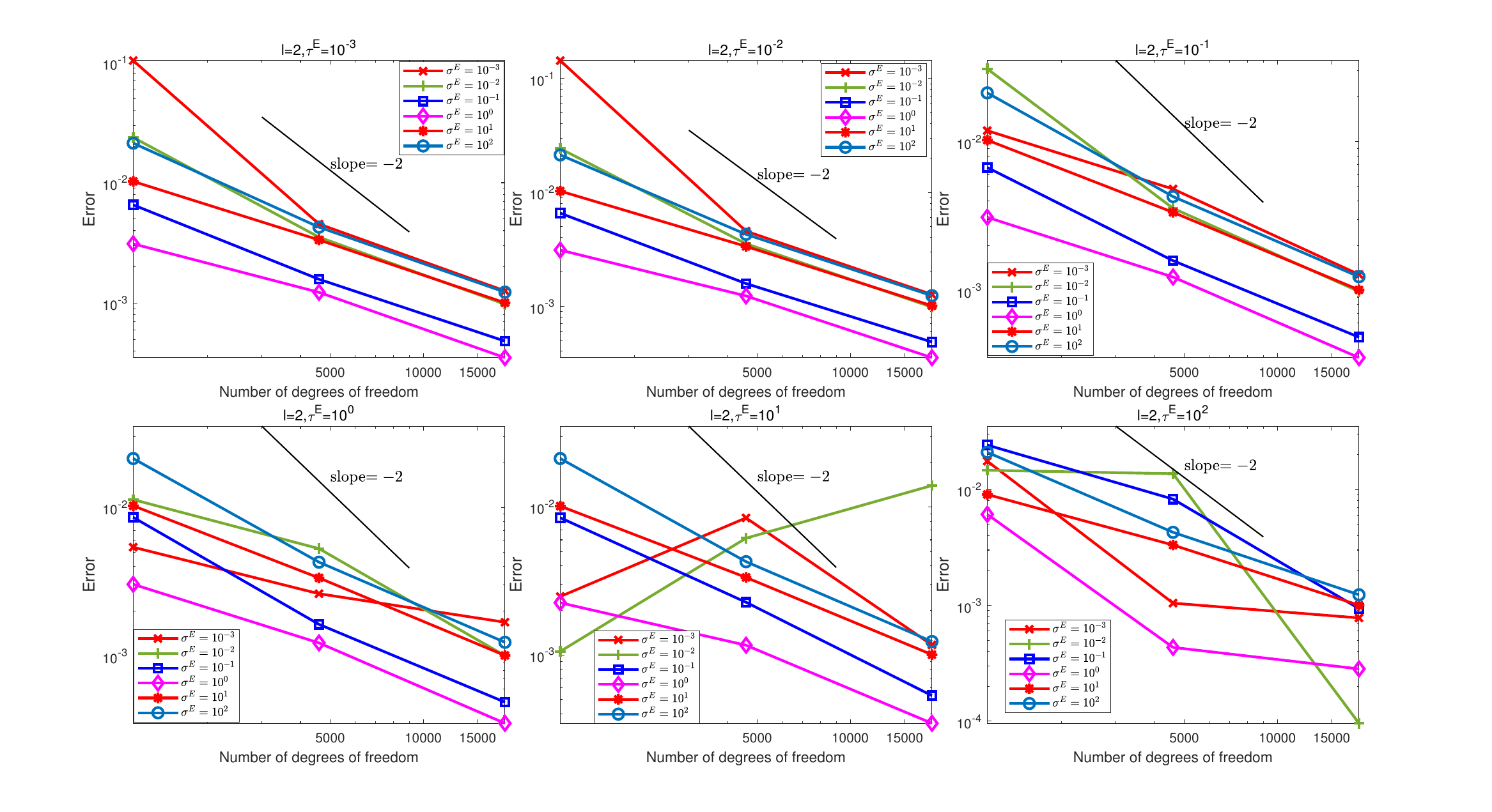}
\caption{The error curve of the first approximate eigenvalue obtained by using the VEM with degree $l=2$ on $\Omega_L$ with different parameter selections for $\sigma^E$ and $\tau^E$}
\label{fig:8}
\end{figure}

\noindent {\bf Example 3:}
We compute the first four approximate eigenvalues of \eqref{a2.3} on $\Omega_C$ using the VEM degrees $l=1$ and $l=2$. The mesh generation we employed is shown in 
Fig.~\ref{fig:2} $(a)$ and $(b)$. The error curves are plotted in Fig.~\ref{fig:9} and~\ref{fig:10}. To achieve this, and also to reduce geometric errors during the uniform refinement process, we uniformly adjust any newly generated boundary nodes that are not on the circle so that they lie exactly on the circle's boundary. The reference values are selected using the same method as in Example 1. Additionally, for the $(b)$-type mesh, we compute the first four approximate eigenvalues of \eqref{a2.3} using both VEM and FEM, with the numerical results presented in Table \ref{tab5}. For $l=1$, Fig.~\ref{fig:9} indicates that our method achieves optimal convergence order. For $l=2$, it can be observed from Fig.~\ref{fig:10} that the geometric error plays a non-negligible role.
From Table \ref{tab5}, we see that when using $l=1$, the degrees of freedom and the accuracy of the approximate eigenvalues for both VEM and FEM are essentially the same. However, when using $l=2$, due to the local degrees of freedom $(Dof_3)$ in VEM, it is evident that VEM has more degrees of freedom compared to FEM.
Combining the observations from Figs.~\ref{fig:3} to~\ref{fig:5} and Table \ref{tab5}, we find that VEM has an advantage over FEM when handling irregular meshes, but for regular triangular partitions, higher-order VEM does not have a significant advantage over the FEM. Nevertheless, on finer meshes, both VEM and FEM achieve at least 5 to 6 digits of accuracy in their approximations.

\begin{figure}[ht]
\captionsetup[subfloat]{labelformat=empty} 
    \centering
    \subfloat[$(a)$-type mesh]{\label{fig1a}\includegraphics[width=.45\linewidth]{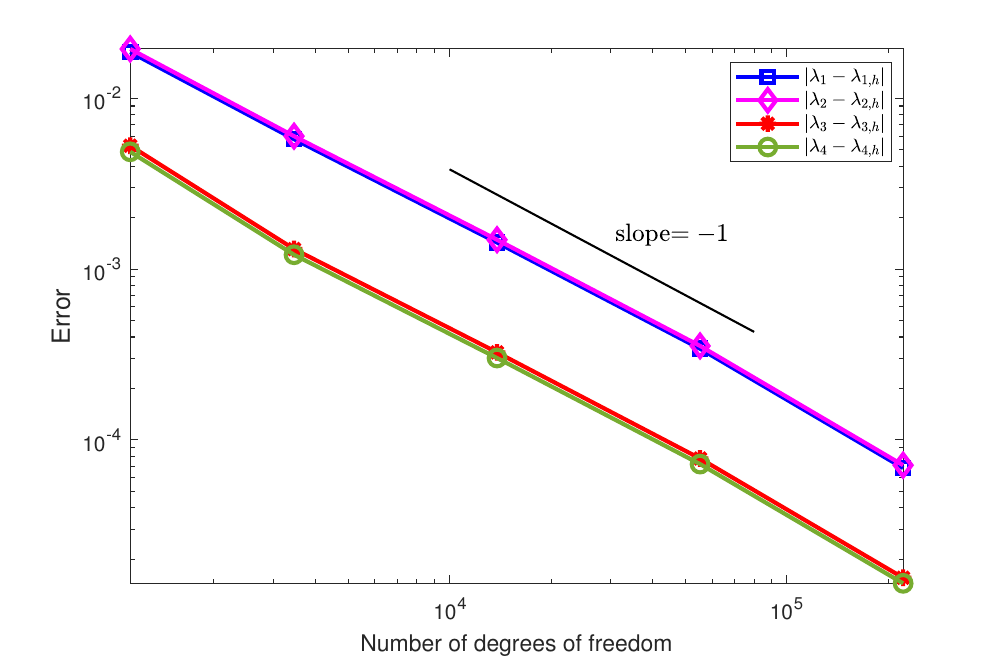}}
    \subfloat[$(b)$-type mesh]{\label{fig1b}\includegraphics[width=.45\linewidth]{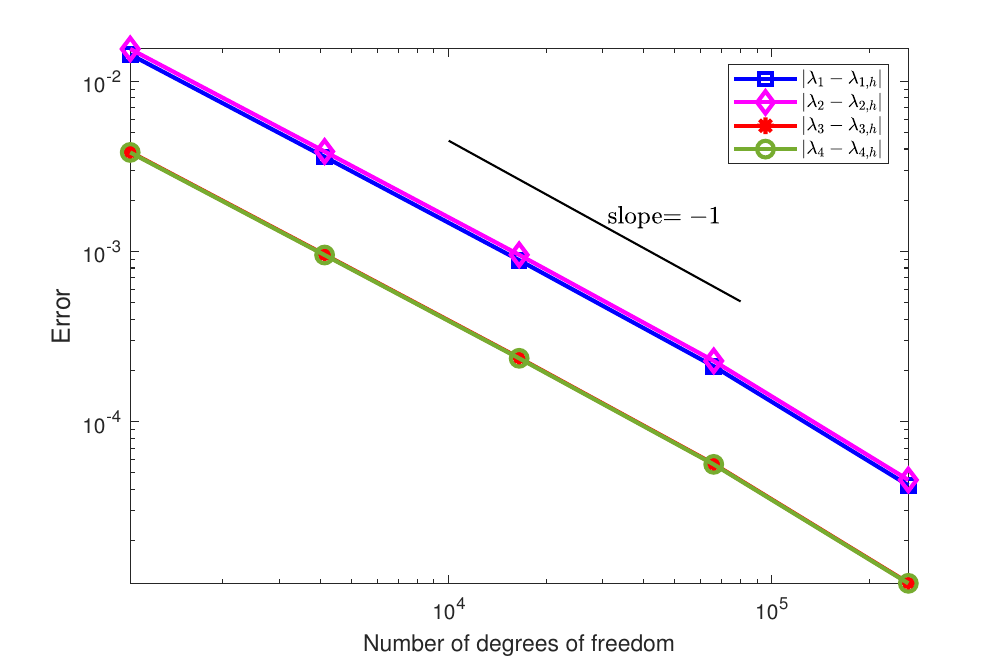}}
    \caption{In Case 1, error curves of the first four approximate eigenvalues obtained using the VEM with degree $l=1$ on $\Omega_C$}
    \label{fig:9}
\end{figure}

\begin{figure}[ht]
\captionsetup[subfloat]{labelformat=empty} 
    \centering
    \subfloat[$(a)$-type mesh]{\label{fig1a}\includegraphics[width=.45\linewidth]{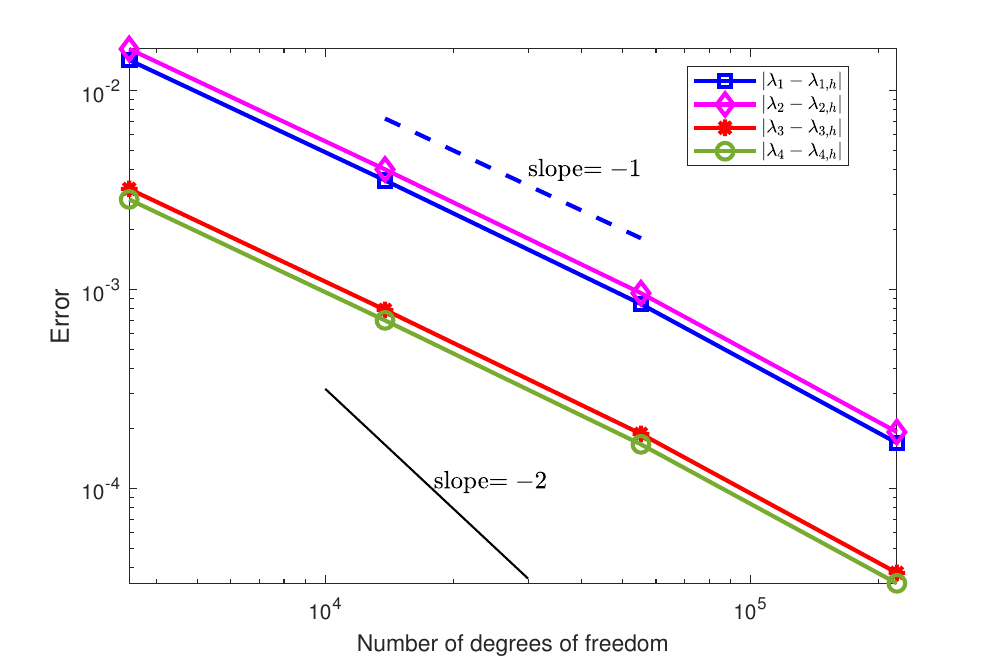}}
    \subfloat[$(b)$-type mesh]{\label{fig1b}\includegraphics[width=.45\linewidth]{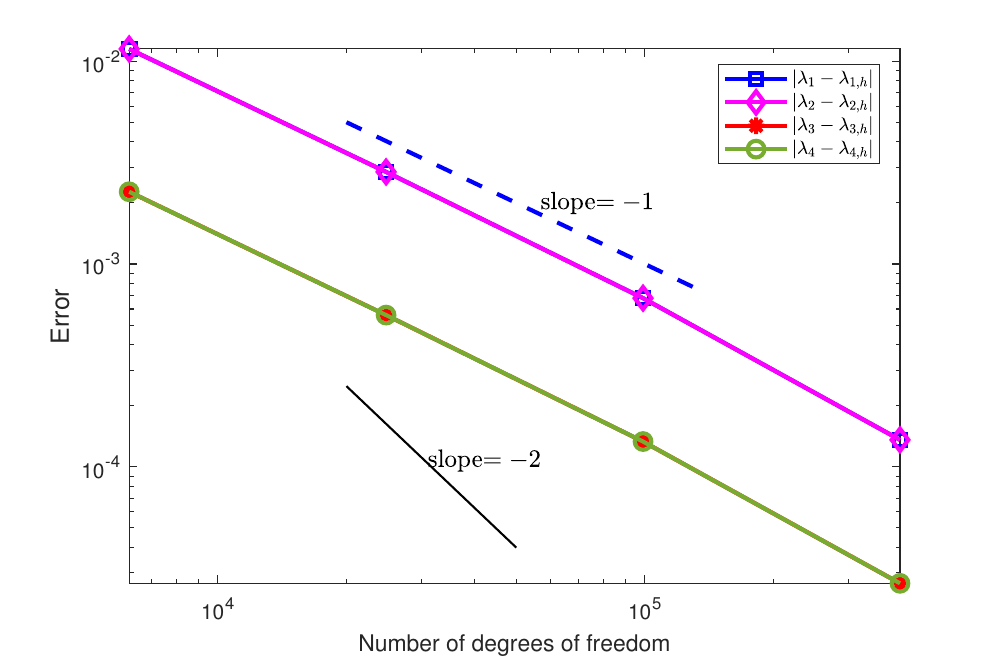}}
    \caption{In Case 1, error curves of the first four approximate eigenvalues obtained using the VEM with degree $l=2$ on $\Omega_C$}
    \label{fig:10}
\end{figure}

\begin{table}[ht]
\centering\begin{minipage}{\textwidth}
\caption{The first four approximate eigenvalues obtained by VEM and FEM on the $(b)$-type mesh generation of $\Omega_C$}
\label{tab5}
\begin{tabular*}{\textwidth}{@{\extracolsep{\fill}}*{7}{l}@{\extracolsep{\fill}}}
\toprule%
{$l$}  &methods  &$Ndof$    &$\lambda_{1,h}$ &$\lambda_{2,h}$  &$\lambda_{3,h}$ &$\lambda_{4,h}$ \\
\midrule
 \multirow{10}{*}{$1$} &\multirow{5}{*}{VEM}
      & 1034    & -1.74042528  & -1.74153847  & 2.27816716  & 2.27819096   \\
   &  & 4130    & -1.72964942  & -1.72992467  & 2.28105053  & 2.28105654    \\
   &  & 16514   & -1.72694329  & -1.72701182  & 2.28177276  & 2.28177427  \\
   &  & 66050   & -1.72626607  & -1.72628318  & 2.28195349  & 2.28195387   \\
   &  & 264194  & -1.72609678  & -1.72610106  & 2.28199870  & 2.28199879   \\
                                 \cmidrule{3-7}
                                 &\multirow{5}{*}{FEM}  & 1034 &-1.74042528  & -1.74153847  & 2.27816716  & 2.27819096  \\
                                & & 4130   &-1.72964942  & -1.72992467  & 2.28105053  & 2.28105654  \\
                                & & 16514  &-1.72694329  & -1.72701182  & 2.28177276  & 2.28177427  \\
                                & & 66050  & -1.72626607 & -1.72628318  & 2.28195349  & 2.28195387  \\
                                & & 264194 &-1.72609678  & -1.72610106  & 2.28199870  & 2.28199879  \\ 
                                \cmidrule{2-7} 
  \multirow{8}{*}{$2$} &\multirow{4}{*}{VEM}
      & 6194   &-1.73757028  & -1.73757194  & 2.27973410  & 2.27973412   \\
    & & 24770  &-1.72893022  & -1.72893024  & 2.28144483  & 2.28144483   \\
    & & 99074  &-1.72676448  & -1.72676449  & 2.28187158  & 2.28187158   \\
    & & 396290 &-1.72622170  & -1.72622170  & 2.28197823  & 2.28197823\\
    \cmidrule{3-7} 
    &\multirow{4}{*}{FEM}  & 4130 &-1.73749092  & -1.73749551  & 2.27973482  & 2.27973485  \\
                               & & 16514  &-1.72892024  & -1.72892100  & 2.28144492  & 2.28144492  \\
                               & & 66050  &-1.72676323  & -1.72676333  & 2.28187160  & 2.28187160  \\
                               & & 264194 &-1.72622154  & -1.72622155  & 2.28197823  & 2.28197823  \\
\bottomrule
\end{tabular*}\end{minipage}
\end{table}

\noindent {\bf Example 4:} In this example, we consider the case where $n$ is a complex number.
Let $\Omega=B$ be a disk centered at the origin with a radius of 1.5 (see Fig.~\ref{fig:11}),
and let $D_1$ be a square with vertices at the coordinates and a side length of $\sqrt{2}$. The
region $D_2$ is an L-shaped domain given by $[-0.9, 1.1] \times [-1.1, 0.9] \backslash (0.1, 1.1) \times (-1.1, -0.1)$, while $D_3$ is taken
as $\Omega_C$. We set the parameters on $D_\kappa~(\kappa=1, 2, 3)$ according to Case 2, and set $n=1$ on $\Omega \backslash D_\kappa$ ($\kappa=1, 2, 3$). Using MATLAB's pdetool, the mesh generation we employed is shown in Fig.~\ref{fig:11}.
We compute the first five approximate eigenvalues of the problem using the VEM with degrees \(l=1\) and \(l=2\), and the results are listed in Tables \ref{tab10}-\ref{tab12}.
When $l=1$, we refer to the exact modified transmission eigenvalues given in Example 7-9 of \cite{Liu2023}.
The comparison between Tables \ref{tab10}-\ref{tab12} and the results in \cite{Liu2023} indicates that our method is effective in
computing the modified transmission eigenvalues with absorbing media.

\begin{figure}[ht]
\captionsetup[subfloat]{labelformat=empty} 
    \centering
    \subfloat[$\widehat{\mathcal{T}_h^1}$]{\label{fig1a}\includegraphics[width=.33\linewidth]{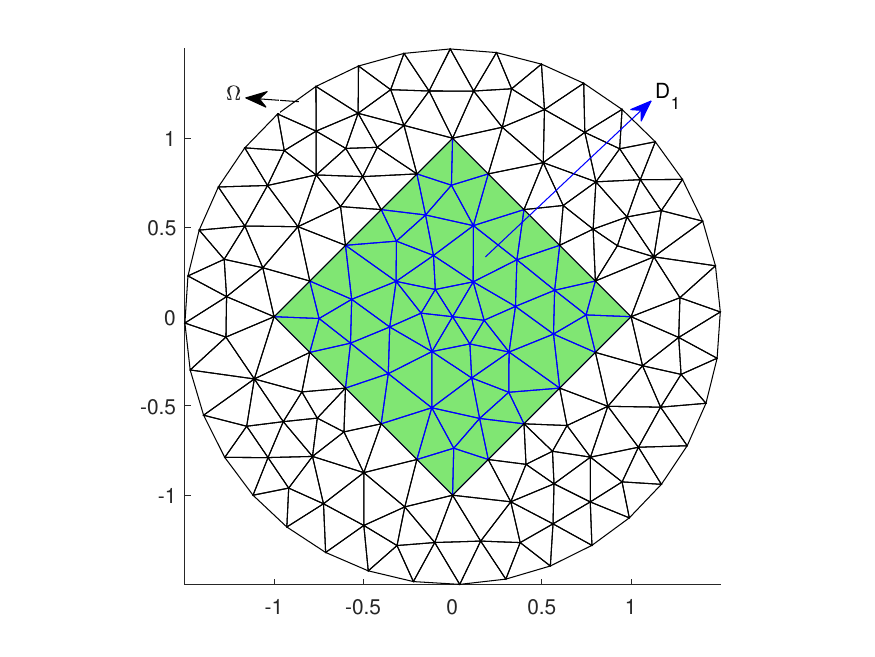}}
    \subfloat[$\widehat{\mathcal{T}_h^2}$]{\label{fig1a}\includegraphics[width=.33\linewidth]{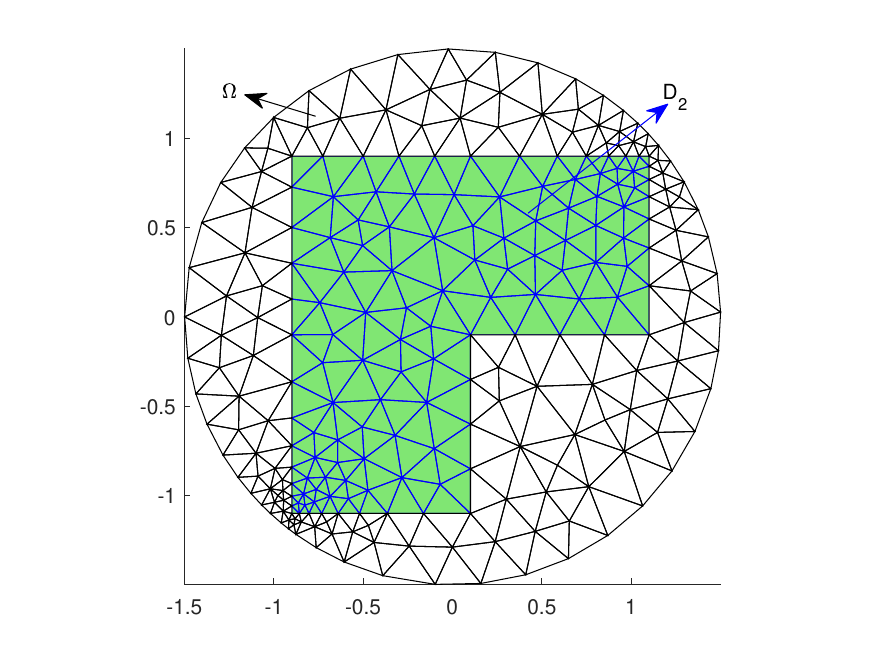}}
    \subfloat[$\widehat{\mathcal{T}_h^3}$]{\label{fig1b}\includegraphics[width=.33\linewidth]{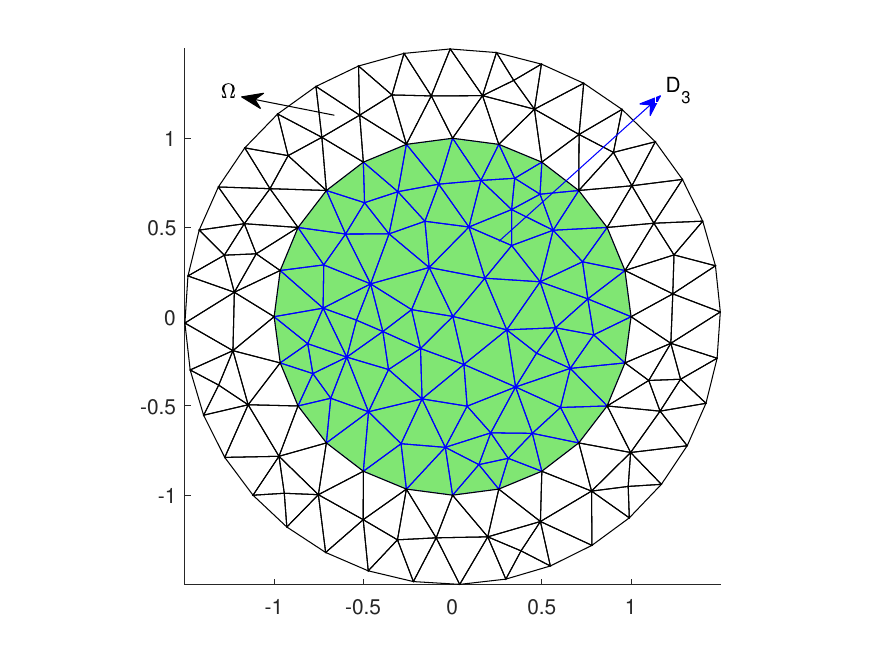}}
    \caption{The generated mesh of Example 4}
    \label{fig:11}
\end{figure}

\begin{table}[ht]
\setlength{\tabcolsep}{1pt} 
\centering\begin{minipage}{\textwidth}
\caption{The first five approximate eigenvalues obtained by solving with VEM on the generated mesh $\widehat{\mathcal{T}_h^1}$}
\label{tab10}
\begin{tabular*}{\textwidth}{@{\extracolsep{\fill}}*{6}{l}@{\extracolsep{\fill}}}
\toprule%
$l$  &$Ndof$ &$\lambda_{1,h}=\lambda_{2,h}$(Rate)  &$\lambda_{3,h}$(Rate) &$\lambda_{4,h}$(Rate) &$\lambda_{5,h}$(Rate) \\
\midrule
 \multirow[c]{8}{*}{$1$}
    &\multirow[t]{2}{*}{266}   &0.1889+0.5283$\mathrm{i}$    & 0.5418+1.0421$\mathrm{i}$  & -1.8446+0.2723$\mathrm{i}$  &-1.8633+0.1121$\mathrm{i}$   \\
                   &           &(-) &(-) &(-) &(-)\\
    &\multirow[t]{2}{*}{1058}  &0.1876+0.5257$\mathrm{i}$    & 0.5232+1.0269$\mathrm{i}$  & -1.8619+0.1130$\mathrm{i}$  & -1.8498+0.2754$\mathrm{i}$ \\
                   &           &(0.989)     &(1.029)     &(4.427)     &(3.043) \\
    &\multirow[t]{2}{*}{4226}  &0.1872+0.5250$\mathrm{i}$    & 0.5185+1.0229$\mathrm{i}$  & -1.8619+0.1133$\mathrm{i}$  & -1.8515+0.2763$\mathrm{i}$  \\
                   &           &(1.146)     &(1.158 )     &(1.128)     &(1.123) \\
    &\multirow[t]{2}{*}{16898} &0.1871+0.5248$\mathrm{i}$    & 0.5173+1.0219$\mathrm{i}$  & -1.8619+0.1134$\mathrm{i}$   & -1.8519+0.2766$\mathrm{i}$\\
                   &           &(-)     &(-)     &(-)     &(-) \\
                                 \cmidrule{2-6}
  \multirow[c]{6}{*}{$2$}
    &\multirow[t]{2}{*}{1586}  &0.1827+0.5297$\mathrm{i}$   & 0.5208+1.0299$\mathrm{i}$  & -1.8783+0.1149$\mathrm{i}$  & -1.8682+0.2805$\mathrm{i}$   \\
    &           &(-)     &(-)     &(-)     &(-) \\
    &\multirow[t]{2}{*}{6338}  &0.1859+0.5260$\mathrm{i}$   & 0.5179+1.0236$\mathrm{i}$  & -1.8660+0.1138$\mathrm{i}$  & -1.8561+0.2776$\mathrm{i}$   \\
    &           &(1.160)     &(1.168)     &(1.156)     &(1.156) \\
    &\multirow[t]{2}{*}{25346} &0.1867+0.5251$\mathrm{i}$   & 0.5172+1.0221$\mathrm{i}$  & -1.8629+0.1135$\mathrm{i}$  & -1.8531+0.2769$\mathrm{i}$ \\ 
    &           &(-)     &(-)     &(-)     &(-) \\
\bottomrule
\end{tabular*}\end{minipage}
\end{table}

\begin{table}[ht]
\setlength{\tabcolsep}{1pt} 
\centering\begin{minipage}{\textwidth}
\caption{The first five approximate eigenvalues obtained by solving with VEM on the generated mesh of $\widehat{\mathcal{T}_h^2}$}
\label{tab11}
\begin{tabular*}{\textwidth}{@{\extracolsep{\fill}}*{7}{l}@{\extracolsep{\fill}}}
\toprule%
\multicolumn{1}{c}{$l$}  &$Ndof$ &$\lambda_{1,h}$ &$\lambda_{2,h}$  &$\lambda_{3,h}$ &$\lambda_{4,h}$ &$\lambda_{5,h}$       \\
\midrule
 \multirow{8}{*}{$1$}
 &\multirow[t]{2}{*}{454}  &0.1055+0.8085$\mathrm{i}$ &0.2193+1.8710$\mathrm{i}$ &-1.8038+0.6405$\mathrm{i}$ &-1.5246+1.2254$\mathrm{i}$ &1.8679+1.3493$\mathrm{i}$\\
    &           &(-)     &(-)     &(-)     &(-)   &(-)\\
 &\multirow[t]{2}{*}{1810} &0.1052+0.8013$\mathrm{i}$ &0.2116+1.8661$\mathrm{i}$ &-1.8040+0.6488$\mathrm{i}$ &-1.5289+1.2412$\mathrm{i}$ &1.9092+1.3175$\mathrm{i}$\\
    &           &(1.015)     &(1.018)     &(0.968)     &(0.987)   &(1.020)\\
&\multirow[t]{2}{*}{7234}  &0.1051+0.7995$\mathrm{i}$ &0.2097+1.8648$\mathrm{i}$ &-1.8043+0.6511$\mathrm{i}$ &-1.5301+1.2455$\mathrm{i}$ &1.9188+1.3081$\mathrm{i}$\\
    &           &(1.155)     &(1.156)     &(1.141)     &(1.147)   &(1.157)\\
&\multirow[t]{2}{*}{28930} &0.1050+0.7990$\mathrm{i}$ &0.2092+1.8645$\mathrm{i}$ &-1.8044+0.6517$\mathrm{i}$ &-1.5304+1.2466$\mathrm{i}$ &1.9211+1.3057$\mathrm{i}$\\
    &           &(-)     &(-)     &(-)     &(-)   &(-)\\
                                 \cmidrule{2-7}
  \multirow{6}{*}{$2$}
&\multirow[t]{2}{*}{2714} &0.0992+0.8063$\mathrm{i}$  &0.2115+1.8745$\mathrm{i}$ &-1.8219+0.6594$\mathrm{i}$ &-1.5344+1.2538$\mathrm{i}$ &1.9196+1.3087$\mathrm{i}$\\
   &           &(-)     &(-)     &(-)     &(-)   &(-)\\
&\multirow[t]{2}{*}{10850} &0.1036+0.8007$\mathrm{i}$ &0.2096+1.8669$\mathrm{i}$ &-1.8088+0.6538$\mathrm{i}$ &-1.5315+1.2487$\mathrm{i}$ &1.9213+1.3058$\mathrm{i}$\\
 &           &(1.162)     &(1.159)     &(1.158)     &(1.145)   &(1.149)\\
&\multirow[t]{2}{*}{43394} &0.1047+0.7993$\mathrm{i}$ &0.2092+1.8650$\mathrm{i}$ &-1.8055+0.6523$\mathrm{i}$ &-1.5308+1.2474$\mathrm{i}$ &1.9217+1.3051$\mathrm{i}$\\
&           &(-)     &(-)     &(-)     &(-)   &(-)\\
\bottomrule
\end{tabular*}\end{minipage}
\end{table}

\begin{table}[ht]
\centering\begin{minipage}{\textwidth}
\caption{The first five approximate eigenvalues obtained by solving with VEM on the generated mesh of $\widehat{\mathcal{T}_h^3}$}
\label{tab12}
\begin{tabular*}{\textwidth}{@{\extracolsep{\fill}}*{5}{l}@{\extracolsep{\fill}}}
\toprule%
$l$  &$Ndof$ &$\lambda_{1,h}=\lambda_{2,h}(Rate)$  &$\lambda_{3,h}=\lambda_{4,h}(Rate)$ &$\lambda_{5,h}(Rate)$       \\
\midrule
 \multirow{4}{*}{$1$}
    & 256   &0.1303+1.1681$\mathrm{i}$(-)        & -1.8352+0.6049$\mathrm{i}$(-)        & 0.5188+2.0035$\mathrm{i}$(-)   \\
    & 1018  &0.1230+1.1796$\mathrm{i}$(1.021)    & -1.8469+0.6295$\mathrm{i}$(0.990)    & 0.4817+1.9743$\mathrm{i}$(1.049)   \\
    & 4066  &0.1210+1.1825$\mathrm{i}$(1.156)    & -1.8504+0.6361$\mathrm{i}$(1.148)    & 0.4727+1.9670$\mathrm{i}$(1.162)   \\
    & 16258 &0.1204+1.1832$\mathrm{i}$(-)        & -1.8513+0.6377$\mathrm{i}$(-)        & 0.4704+1.9651$\mathrm{i}$(-)   \\
                                 \cmidrule{2-5}
  \multirow{3}{*}{$2$}
    & 1526  &0.1179+1.1693$\mathrm{i}$(-)    & -1.8674+0.6262$\mathrm{i}$(-)    & 0.4800+1.9473$\mathrm{i}$(-)   \\
    & 6098  &0.1197+1.1799$\mathrm{i}$(1.161)       & -1.8556+0.6353$\mathrm{i}$(1.158)       & 0.4723+1.9603$\mathrm{i}$(1.161)   \\
    & 24386 &0.1201+1.1825$\mathrm{i}$(-)    & -1.8526+0.6375$\mathrm{i}$(-)    & 0.4704+1.9635$\mathrm{i}$(-)   \\
\bottomrule
\end{tabular*}\end{minipage}
\end{table}

\noindent {\bf Example 5:} In this example, we consider the case where $n$ is a variable coefficient. First, we compute the first approximate eigenpair of \eqref{a2.3} on a square of side length $2$, where the coefficients in the first and third quadrants are set as in Case 3, and those in the second and fourth quadrants are set as in Case 1. Furthermore, under the same coefficient setting as in Case 3, we compute the first four approximate eigenvalues of \eqref{a2.3} on a domain with a hole obtained by removing a unit disk from a square of side length $2$. The corresponding initial mesh is shown in Fig.~\ref{fig:12}. Since the exact eigenpairs are unknown, we adopt the same reference value selection strategy as that used in Example 1, and when plotting the eigenfunction error curves, we use the $L^2$ norm of the difference between the numerical eigenfunctions on adjacent meshes, $\|u_{1,h/2} - u_{1,h}\|_0$, as a substitute for the error $\|u_1 - u_{1,h}\|_0$, as shown in Fig.~\ref{fig:13}.\\
\indent From the left panel of Fig.~\ref{fig:13}, we observe that the error curves for the eigenvalues and eigenfunctions are generally parallel to the lines with slopes $-1$ and $-0.5$, respectively, indicating that our method achieves the optimal convergence order for the setting with discontinuous variable coefficients. Similarly, the right panel of Fig.~\ref{fig:13} demonstrates that our method remains effective for complex domains with holes.
\begin{figure}[ht]
\captionsetup[subfloat]{labelformat=empty} 
    \centering
    \subfloat[$\mathcal{T}_h^4$]{\label{fig1a}\includegraphics[width=.45\linewidth]{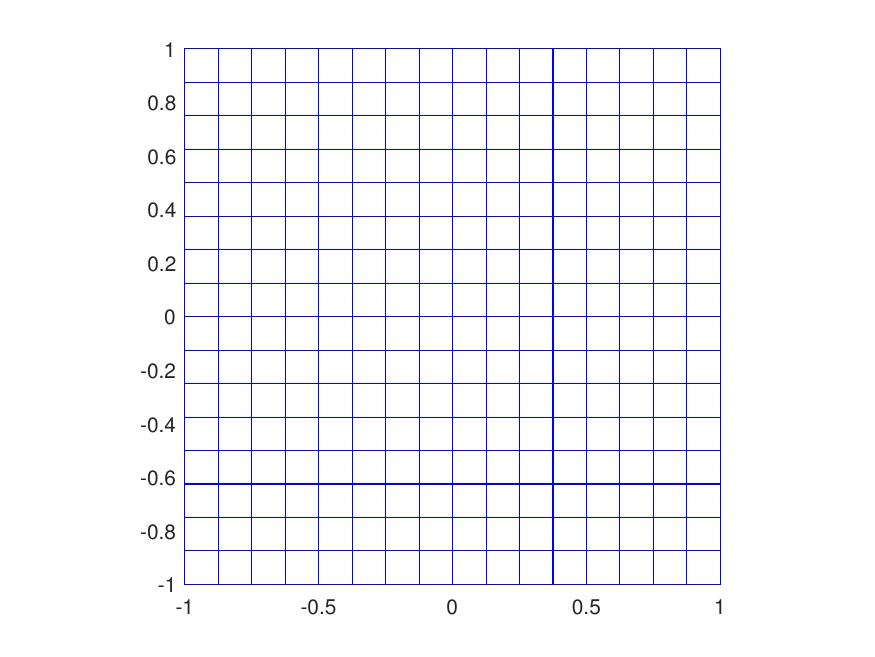}}
    \subfloat[$\mathcal{T}_h^5$]{\label{fig1b}\includegraphics[width=.45\linewidth]{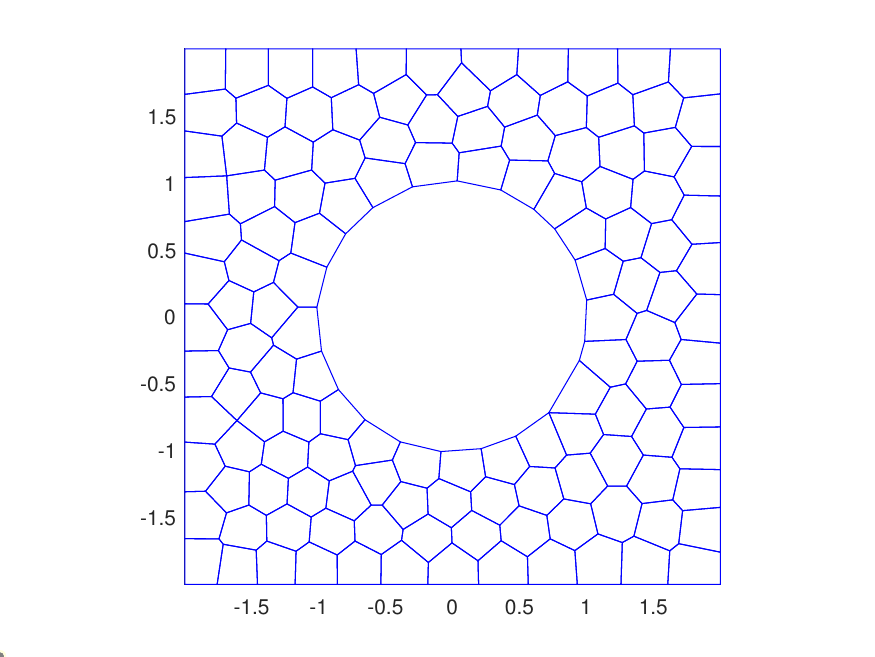}}
    \caption{The initial mesh of Example 5}
    \label{fig:12}
\end{figure}

\begin{figure}[ht]
\captionsetup[subfloat]{labelformat=empty} 
    \centering
    \subfloat[Study of convergence when $ l = 1 $]{\label{fig1a}\includegraphics[width=.45\linewidth]{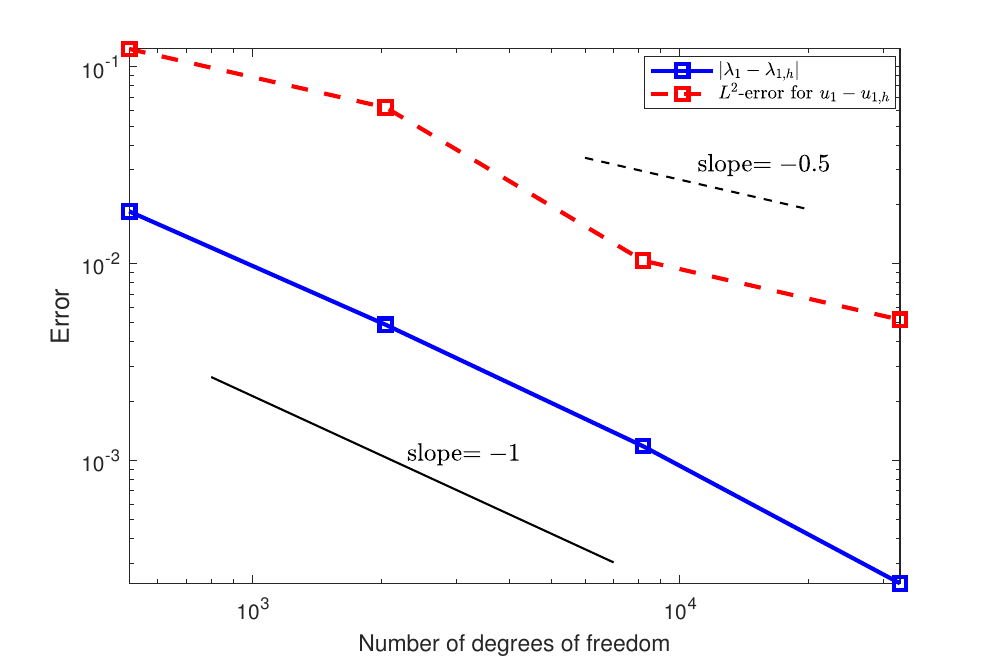}}
    \subfloat[Study of convergence when $ l = 1 $]{\label{fig1b}\includegraphics[width=.45\linewidth]{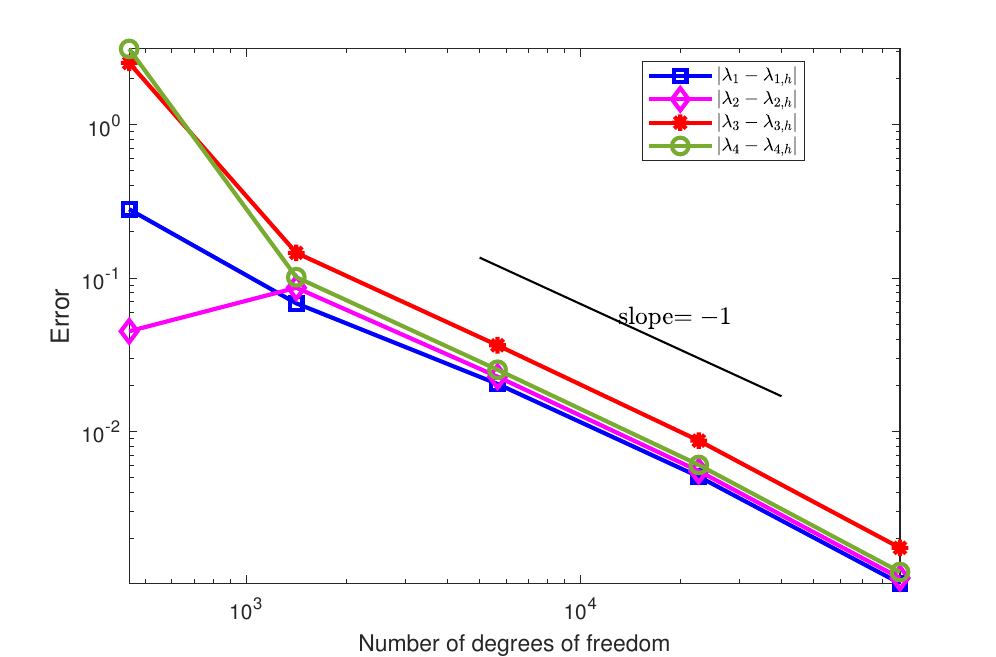}}
    \caption{Example 5 error curves: first eigenvalue/eigenfunction errors on $\mathcal{T}_h^4$ (left) and first four eigenvalues errors on $\mathcal{T}_h^5$ (right)}
    \label{fig:13}
\end{figure}

\section*{Acknowledgment}The author would like to express their sincere gratitude to the anonymous reviewers for their careful reading of the manuscript, as well as their coments that led to a considerable improvement of the original manuscript. Special thanks go to Professor Yidu Yang for his diligent guidance and support throughout the research and writing process.

\section*{Conflict of interest}
The authors declare no potential conflict of interests.\\

\end{document}